\newtheorem{thm}{Theorem}[section]
\newtheorem{proc}{Procedure}[section]
\newtheorem{lem}{Lemma}[section]
\newtheorem{claim}{Claim}[section]
\newtheorem{definition}{Definition}[section]
\begin{document}
\title{Non-$r$-partite graphs without complete split subgraphs\footnote{Supported by the Major Natural
Science Research Project of Universities in Anhui Province (Nos. 2023AH051589),
the Domestic Academic Visit Project of Universities in Anhui Province (No. JNFX2023064), the National Natural Science Foundation of China (No. 12201408), and the Natural Science Research Project of Chuzhou University (No. 2022XJYB17).}}
\author{ {\bf Bing Wang $^{a,b}$}, {\bf Wenwen Chen $^a$},
{\bf Ping Zhang$^{c}$}\thanks{Corresponding author: czcww@chzu.edu.cn (W. Chen).}
\\
\small $^{a}$ School of Mathematics and Finance, Chuzhou University,\\
\small Chuzhou, Anhui 239012, China\\
\small $^{b}$ School of Mathematical Sciences, East China Normal University, \\
\small Shanghai 200241, China \\
\small $^{c}$ College of Science, University of Shanghai for Science and Technology,\\
\small  Shanghai 200093, China\\
}

\date{}
\maketitle
{\flushleft\large\bf Abstract}  The classical Simonovits' chromatic critical edge theorem shows that for sufficiently large $n$, if $H$ is an edge-color-critical graph with $\chi(H)=p+1\ge 3$, then the Tur\'an graph $T_{n,p}$ is the unique extremal graph with respect to ${\rm ex}(n,H)$. Denote by ${\rm EX}_{r+1}(n,H)$ and ${\rm SPEX}_{r+1}(n,H)$ the family of $n$-vertex $H$-free non-$r$-partite graphs with the maximum size and with the spectral radius, respectively. Li and Peng [SIAM J. Discrete Math. 37 (2023)  2462--2485] characterized the unique graph in $\mathrm{SPEX}_{r+1}(n,K_{r+1})$ for $r\geq 2$ and showed that $\mathrm{SPEX}_{r+1}(n,K_{r+1})\subseteq \mathrm{EX}_{r+1}(n,K_{r+1})$. It is interesting to study the
extremal or spectral extremal problems for color-critical graph $H$ in non-$r$-partite graphs.
For $p\geq 2$ and $q\geq 1$, we call the graph $B_{p,q}:=K_p\nabla qK_1$ a complete split graph (or generalized book graph). In this note, we determine the unique spectral extremal graph in $\mathrm{SPEX}_{r+1}(n,B_{p,q})$ and show that  $\mathrm{SPEX}_{r+1}(n,B_{p,q})\subseteq \mathrm{EX}_{r+1}(n,B_{p,q})$ for sufficiently large $n$.
\begin{flushleft}
\textbf{Keywords:} Extremal graph; Spectral radius; Complete split graph
\end{flushleft}
\textbf{AMS Classification:} 05C35; 05C50

\section{Introduction}

In this paper we will consider some classical problems in extremal
combinatorics. For general extremal problems, one aims to maximize some parameter among a class of graphs avoiding some other
structures. For a given family of graphs $\mathcal{F}$, we say that $G$ is \emph{$\mathcal{F}$-free} if $G$ does not contain any member of $\mathcal{F}$ as a subgraph. If $\mathcal{F}$ contains exactly a member $F$, then we say that $G$ is $F$-free for short. For a graph $F$, the \emph{Tur\'an number} of $F$, denoted by $\text{ex}(n,F)$, is the maximum number of edges possible in an $n$-vertex $F$-free graph. An $F$-free graph with $n$ vertices and $\text{ex}(n,F)$ edges is called an \emph{extremal graph} for $F$. For convenience, let $\text{EX}(n,F)$ denote the family of $n$-vertex $F$-free graphs with the maximum number of edges. Besides, for an integer $k\ge1$, let $[k]=\{1,2,\ldots,k\}$.

A well-known theorem of Mantel \cite{Mantel07} shows that if $G$ is an $n$-vertex $K_3$-free graph,
then $e(G)\leq e(K_{\lceil{\frac{n}2}\rceil,\lfloor{\frac{n}2}\rfloor})=\lfloor{\frac{n^2}4}\rfloor$.
Mantel's theorem has many applications and generalizations in the literature.
For example, Erd\H{o}s gave the following stability form of Mantel's theorem (see \cite{Bondy2008}, Page 306):
If $G$ is an $n$-vertex non-bipartite $K_3$-free graph,
then $e(G)\leq \lfloor{\frac{(n-1)^2}4}\rfloor+1$.
The upper bound is best possible and the extremal graph is not unique. Let $T_{n,r}$ be the $n$-vertex complete $r$-partite graph and all its part sizes as equal as possible.
The famous Tur\'{a}n's theorem \cite{Turan} shows that for any integer $r$, $\text{ex}(n, K_{r+1})\le e(T_{n,r})$, and equality holds when extremal graph is isomorphic to Tur\'an graph $T_{n,r}$. The classical Tur\'an's problem and its variations have been paid much attention and many influential results have been obtained, we refer to a survey \cite{Nikiforov11} for more details.

\begin{definition}
Assume that $T_1,\dots,T_p$ are color classes of $T_{n-1, p}$ with $n_i=|T_i|$ for $i\in [p]$.
Moreover, we assume that $\lfloor \frac{n-1}{p}\rfloor=n_1\leq n_2\leq \cdots \leq n_p=\lceil\frac{n-1}{p}\rceil$,
$u_1\in T_1$ and  $u_2\in T_2$.
Let $Y_r(n)$ be the graph obtained from $T_{n-1, p}$ by removing the edge $u_1u_2$, then adding all edges between a new vertex $u_0$ and $(\cup_{i=3}^{p}T_i)\cup \{u_1,u_2\}$, see Figure \ref{fig-1.1}.
\end{definition}

In 1981, Brouwer gave the following statement on Tur\'an's theorem.

\begin{thm}\label{thm0}\emph{(Brouwer, \cite{Brouwer1981})}
Let $n\ge 2r+1$ and $G$ be an $n$-vertex $K_{r+1}$-free non-$r$-partite graph. Then $e(G)\le e(T_r(n))-\lfloor{\frac{n}r}\rfloor+1$.
\end{thm}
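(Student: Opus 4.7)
The target bound $e(T_r(n))-\lfloor n/r\rfloor+1$ is achieved by $Y_r(n)$, which differs from $T_r(n-1)$ by the attachment of one extra vertex whose role is to destroy $r$-partiteness while sacrificing only a single cross-part edge; this structural picture guides the plan. The aim is to identify in an extremal graph $G$ a single ``critical'' vertex $w$ such that $G-w$ is $r$-partite, and then to control the trade-off between $d_G(w)$ and the edge-deficit of $G-w$ compared to $T_r(n-1)$.

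Let $G$ be an $n$-vertex $K_{r+1}$-free non-$r$-partite graph with the maximum number of edges. First I would exploit edge-maximality: for every non-edge $xy$ the graph $G+xy$ is still non-$r$-partite (adding edges cannot decrease the chromatic number), so by maximality $G+xy$ must contain $K_{r+1}$, forcing $x$ and $y$ to share an $(r-1)$-clique of common neighbors. Second, fix an $r$-partition $V_1,\ldots,V_r$ of $V(G)$ minimizing the number $\beta\geq 1$ of intra-part edges; local optimality gives $|N(x)\cap V_i|\leq |N(x)\cap V_j|$ for every $x\in V_i$ and $j\neq i$, whence $|N(x)\cap V_i|\leq d(x)/r$. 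For an intra-part edge $uv\in V_1$ the $K_{r+1}$-free hypothesis forces $G[N(u)\cap N(v)]$ to be $K_{r-1}$-free, so Tur\'an's theorem applied inside $N(u)\cap N(v)$ yields a quantitative deficit in the cross-part edges incident to $u$ or $v$.

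These ingredients, together with the structure of a vertex-minimal $(r+1)$-chromatic subgraph $F\subseteq G$ (which satisfies $|V(F)|\geq r+2$ because $G$ contains no $K_{r+1}$, and whose minimum degree inside $F$ is at least $r$), should pinpoint a single critical vertex $w$ whose deletion makes $G$ $r$-partite. Once $w$ is found, write $G-w$ in its $r$-partition $V_1',\ldots,V_r'$ and let $\delta$ be the number of cross-part edges of $K_{|V_1'|,\ldots,|V_r'|}$ missing from $G-w$, so that $e(G)=e(G-w)+d_G(w)\leq e(T_r(n-1))-\delta+d_G(w)$. Because $G$ is non-$r$-partite, $w$ has a neighbor in every $V_j'$; because $G$ is $K_{r+1}$-free, $G[N(w)]$ contains no $K_r$, and this forces $\delta$ to be at least the minimum number of missing cross-edges needed to break every transversal $K_r$ within $N(w)$. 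A worst-case analysis mirroring the configuration in $Y_r(n)$ then gives $d_G(w)-\delta\leq e(T_r(n))-e(T_r(n-1))-\lfloor n/r\rfloor+1$, yielding the theorem.

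The principal obstacle is establishing the existence of the single critical vertex $w$: for arbitrary non-$r$-partite $K_{r+1}$-free graphs no such $w$ need exist, so extremality must enter essentially. The trickiest sub-step is ruling out $\beta\geq 2$, where one must show that two intra-part edges force a cross-part loss strictly greater than $\lfloor n/r\rfloor-1$. The base case $r=2$ admits a separate and cleaner treatment via Erd\H{o}s's stability form of Mantel's theorem quoted in the introduction.
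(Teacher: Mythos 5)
The paper does not prove this statement at all --- it is quoted directly from Brouwer's 1981 report --- so the only question is whether your argument stands on its own, and it does not: it is an outline in which the two decisive steps are asserted rather than proved. First, the existence of the single critical vertex $w$ with $G-w$ $r$-partite is the load-bearing claim, and you yourself flag it as open; none of the listed ingredients (the common $(r-1)$-clique of the endpoints of any non-edge, the partition minimizing intra-part edges, the vertex-critical $(r+1)$-chromatic subgraph $F$ with $|V(F)|\ge r+2$ and minimum degree at least $r$) is actually combined to produce $w$. Since the extremal graphs for this bound are not unique, you would at least have to show that \emph{some} maximum graph (possibly chosen with an extra tie-breaking condition) admits such a vertex, or restructure the argument so that no such $w$ is needed; as written, if the chosen extremal $G$ has no critical vertex the proof simply stops.

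Second, even granting $w$, the inequality $d_G(w)-\delta\le e(T_r(n))-e(T_r(n-1))-\lfloor n/r\rfloor+1$ is essentially the theorem itself and is dispatched by the phrase ``a worst-case analysis mirroring $Y_r(n)$ then gives.'' What is required there is a genuine quantitative estimate: $w$ has a neighbour in every part of the $r$-partition of $G-w$ and $G[N(w)]$ contains no transversal $K_r$, but a single missing cross-edge destroys only the transversal cliques through its two endpoints, so one must prove a covering-type lower bound on $\delta$ that grows with $d_G(w)$, and in the regime where $d_G(w)$ is very large one must instead exploit that $G[N(w)]$ being $K_r$-free forces a large edge deficit on almost all of $G-w$. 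This balancing argument, together with the case of two or more intra-part edges ($\beta\ge 2$) that you also defer, is the actual content of Brouwer's theorem; until those steps are supplied the proposal is a plausible plan, not a proof.
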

Similar as stability form of Mantel's theorem, the upper bound of Theorem \ref{thm0} is best possible and there are many extremal graphs attaining the bound. For example, the graph $Y_r(n)$ is one of extremal graph of Theorem \ref{thm0}.

\begin{figure}
\centering
\begin{tikzpicture}[scale=0.8, x=1.00mm, y=1.00mm, inner xsep=0pt, inner ysep=0pt, outer xsep=0pt, outer ysep=0pt]
\path[line width=0mm] (86.97,28.00) rectangle +(75.03,54.00);
\definecolor{L}{rgb}{0,0,0}
\path[line width=0.30mm, draw=L] (120.00,75.00) ellipse (28.00mm and 4.00mm);
\path[line width=0.30mm, draw=L] (120.00,35.00) ellipse (28.00mm and 4.00mm);
\path[line width=0.30mm, draw=L] (120.00,55.00) ellipse (28.00mm and 4.00mm);
\draw(149,73) node[anchor=base west]{\fontsize{10}{17.07}\selectfont $T_1$};
\draw(149,53) node[anchor=base west]{\fontsize{10}{17.07}\selectfont $T_{2}$};
\draw(149,33) node[anchor=base west]{\fontsize{10}{17.07}\selectfont $T_p$};

\definecolor{F}{rgb}{0,0,0}
\path[line width=0.60mm, draw=L] (130,75) -- (130,55);
\path[line width=0.60mm, draw=L] (140,35) -- (140,55);
\path[line width=0.60mm, draw=L] (135,75) -- (135,35);

\path[line width=0.30mm, draw=L] (70,65) -- (95,75);
\path[line width=0.30mm, draw=L] (70,65) -- (95,55);
\path[line width=0.60mm, draw=L] (70,65) -- (95,35);


\path[line width=0.30mm, draw=L, dash pattern=on 2.00mm off 1.00mm] (95,75) -- (95,55);

\path[line width=0.30mm, draw=L, fill=F] (95.00,75.00) circle (1.00mm);
\path[line width=0.30mm, draw=L, fill=F] (95.00,55.00) circle (1.00mm);
\path[line width=0.30mm, draw=L, fill=F] (70,65) circle (1.00mm);

\draw(64.00,64.00) node[anchor=base west]{\fontsize{10}{17.07}\selectfont $u_0$};
\draw(97.00,54.00) node[anchor=base west]{\fontsize{10}{17.07}\selectfont $u_{2}$};
\draw(97.00,74.00) node[anchor=base west]{\fontsize{10}{17.07}\selectfont $u_1$};

\path[line width=0.10mm, draw=L, fill=F] (120,42) circle (0.50mm);
\path[line width=0.10mm, draw=L, fill=F] (120,45) circle (0.50mm);
\path[line width=0.10mm, draw=L, fill=F] (120,48) circle (0.50mm);

\end{tikzpicture}%
\caption{The graph $Y_r(n)$.}{\label{fig-1.1}}
\end{figure}
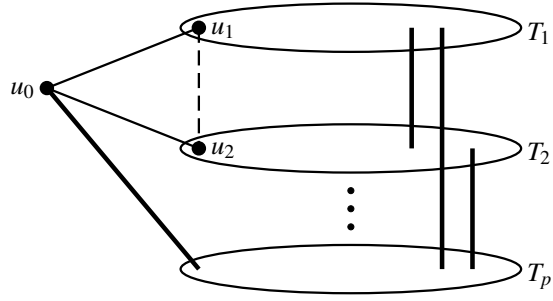

Given an $n$-vertex graph $G$, let $A(G)$ and $\rho(G)$ denote the \emph{adjacency matrix} and the \emph{spectral radius} of $G$. Let $\text{SPEX}(n,F)$ denote the family of $n$-vertex $F$-free graphs with the maximum spectral radius. In 1970, Nosal \cite{Nosal1970} showed that if $G$ is an $n$-vertex $K_3$-free graph with size $m$ then $\rho(G)\le \sqrt{m}$, with equality holds if and only if $G$ is a complete bipartite graph. Combining with Mantel's theorem, the following spectral
Mantel theorem holds: If $G$ is a $K_3$-free graph, then $\rho(G)\le \sqrt{m}\le \sqrt{\lfloor{\frac{n^2}4}\rfloor}=\rho(K_{\lceil{\frac{n}2}\rceil,\lfloor{\frac{n}2}\rfloor})$. In 1986, Wilf \cite{Wilf1986} showed the spectral version of Tur\'an theorem as follows: Let $G$ be an $n$-vertex $K_{r+1}$-free graph, then $\rho(G) \leq \big(1-\frac1r\big)n$.
Nikiforov \cite{Nikiforov5} showed that if $G$ is an $n$-vertex $K_{r+1}$-free graph, then $\rho(G)\le \rho({T_{n, r}})$, with equality holds if and only if $G\cong T_{n,r}$. In 2021, Lin, Ning, and Wu \cite{LIN1} generalized Nosal's result for nonbipartite triangle-free graphs. Li and Peng considered the maximum spectral radius of non-bipartite graphs forbidding short odd cycles
\cite{LIY}. Based on the spectral Zykov symmetrization \cite{Zykov1949}, Li and Peng extended the result of Lin, Ning and Wu to non-$r$-partite $K_{r+1}$-free graphs as follows.

\begin{thm}\label{thm1.0}\emph{(Li and Peng, \cite{Li2023})}
Let $G$ be an $n$-vertex non-$r$-partite $K_{r+1}$-free graph. Then
$\rho(G) \leq \rho(Y_r(n))$, with equality holds if and only if $G\cong Y_r(n)$.
\end{thm}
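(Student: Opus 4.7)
My plan is to combine spectral Zykov symmetrization with a careful analysis of the obstruction that prevents the extremal graph from being $r$-partite. Fix an extremal $G \in \mathrm{SPEX}_{r+1}(n, K_{r+1})$ with positive Perron eigenvector $\mathbf{x}$.

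The first step is a spectral Zykov lemma. For non-adjacent $u,v\in V(G)$, let $G^{v\to u}$ denote the graph obtained from $G$ by replacing $N(v)$ with $N(u)$, and define $G^{u\to v}$ analogously; both are $K_{r+1}$-free. A standard Rayleigh-quotient calculation gives $\max\{\rho(G^{v\to u}),\rho(G^{u\to v})\}\ge \rho(G)$. Thus whenever at least one of these cloning operations preserves non-$r$-partiteness, either we strictly increase $\rho$ (contradicting extremality of $G$) or the clone preserves $\rho$ and is itself extremal. Iterating, I may assume that every non-adjacent pair in $G$ is either a pair of twins (already $N(u)=N(v)$) or ``obstructing'', in the sense that cloning in either direction would destroy the odd obstruction to $r$-partiteness.

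From this rigidity I would argue that $G$ has the form ``a complete $r$-partite graph on $n-1$ vertices plus one exceptional vertex $u_0$''. Any non-adjacent pair not incident with the obstruction can be symmetrized freely, so $G-u_0$ must be complete $r$-partite; and by Nikiforov's spectral Tur\'an theorem applied to the restriction of $\mathbf{x}$ to $V(G)\setminus\{u_0\}$ (together with a perturbation argument showing the part sizes may be balanced without losing extremality), $G-u_0$ must be the Tur\'an graph $T_{n-1,r}$. I would then pin down the attachment of $u_0$ and the missing edge of $T_{n-1,r}$ by a Rayleigh-quotient comparison on the Perron vector: since the entries of the Perron eigenvector of $T_{n-1,r}$ are larger on the smaller parts, the unique configuration maximizing $\rho(G)$ has the removed edge lying between the two smallest parts $T_1,T_2$ and $u_0$ joined precisely to $\{u_1,u_2\}\cup T_3\cup\cdots\cup T_r$, which is exactly $Y_r(n)$.

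The main obstacle I expect is this last identification step: one must compare $\rho$ across all candidate configurations of the form ``$T_{n-1,r}$ minus an edge, plus a new vertex adjacent to a suitable set'' and rule out every deviation from $Y_r(n)$. This requires quantitative eigenvector perturbation or a direct quadratic-form argument; in particular, excluding configurations in which $u_0$ is adjacent only to a proper subset of some $T_i$ with $i\ge 3$, or in which the removed edge is not between the two smallest parts, demands delicate Perron-entry estimates that exploit the hypothesis that $n$ is large.
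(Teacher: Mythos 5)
The paper itself gives no proof of Theorem \ref{thm1.0}: it is quoted from Li and Peng \cite{Li2023}, whose argument is (as the introduction notes) built on spectral Zykov symmetrization, so your choice of toolkit matches the known route. However, what you have written is a plan rather than a proof, and it contains a concrete error. Your intermediate structural claim --- that after symmetrization $G-u_0$ must be complete $r$-partite, indeed $T_{n-1,r}$ --- is false even for the target graph: $Y_r(n)-u_0$ is $T_{n-1,r}$ minus the edge $u_1u_2$, and no single vertex deletion of $Y_r(n)$ produces a complete multipartite graph. So the rigidity you hope to extract from ``every non-obstructing pair can be symmetrized'' cannot yield that conclusion; the structure one must actually reach is ``$T_{n-1,r}$ minus one edge, plus one new vertex attached to a prescribed set,'' and getting there requires a genuine analysis of which pairs are obstructing, not an assertion.

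Beyond that, two essential steps are missing. The symmetrization inequality $\max\{\rho(G^{v\to u}),\rho(G^{u\to v})\}\ge\rho(G)$ only lets you pass to \emph{some} extremal graph of special form, and even that needs a termination argument for the iteration (naive cloning can cycle); the theorem, however, asserts uniqueness, i.e.\ equality only for $G\cong Y_r(n)$, so the equality cases of each symmetrization step (when $x_u=x_v$, or when $\rho$ is preserved by the clone) must be controlled for the original $G$, which your sketch does not do. Finally, you explicitly defer the identification step --- balancing the part sizes, showing the missing edge lies between two specific parts, and pinning down $N(u_0)$ via Perron-entry comparisons --- and this quantitative comparison over all candidate configurations is precisely where the real difficulty of the Li--Peng theorem lies. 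As it stands, the proposal identifies the right method but does not constitute a proof.
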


In 2022, Cioab\u{a}, Desai and Tait \cite{Cioaba2022} conjectured that for a given graph $F$, if $G\in \text{EX}(n,F)$ and $G$ is obtained from Tur\'an graph by adding $O(1)$ edges, then $\text{SPEX}(n,F)\subseteq \text{EX}(n,F)$ for sufficiently large $n$. This conjecture has been confirmed for some special cases of $F$ (for example, complete graphs \cite{Guiduli-1996, Nikiforov5},  friendship graphs \cite{CFTZ2020,ZHAI2022}, intersecting cliques and intersecting odd cycles \cite{DKL2022,Li}) and completely solved by Wang, Kang and Xue \cite{WANG} in 2023.

For an integer $t\ge 1$, we denote $B_t=K_2\nabla tK_1$ the \emph{book graph} with its book size $t$. For two integers $p\ge 2,q\ge 1$, denoted by $B_{p,q}=K_p\nabla qK_1$ the \emph{generalized book graph} of book size $q$. Specially, $B_{p,q}\cong B_q$ for $p=2$ and $B_{p,q}\cong K_{p+1}$ for $q=1$. The generalized book graph $B_{p,q}$ is also called a \emph{complete split graph} (denoted by $S_{p+q,q}$, respectively), which is usual as an extremal graph in spectral extremal theory, see \cite{Nikiforov1} and elsewhere.

Let $\mathcal{G}(n,p,q)$ be the set of graphs with the largest number of edges among all $n$-vertex non-$p$-partite $B_{p,q}$-free graphs. In the paper, we obtain the following results on non-$r$-partite $B_{p,q}$-free graphs respect to Tur\'an-type extremal and spectral Tur\'an-type extremal version, respectively.

\begin{thm}\label{thm1.1}
Let $p\geq 3$, $q\geq 1$ and $n$ be sufficiently large.
Then $Y_r(n)\in \mathcal{G}(n,p,1)\subseteq \mathcal{G}(n,p,q)$.
\end{thm}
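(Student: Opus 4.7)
My plan is to reduce Theorem \ref{thm1.1} to a single edge-count inequality. First, $Y_r(n)\in\mathcal{G}(n,p,1)$ (with the convention $r=p$ in the definition) is immediate from Brouwer's Theorem \ref{thm0}: since $B_{p,1}=K_{p+1}$, the family $\mathcal{G}(n,p,1)$ consists of the edge-maximum non-$p$-partite $K_{p+1}$-free graphs on $n$ vertices, and $Y_p(n)$ attains Brouwer's bound $e(T_p(n))-\lfloor n/p\rfloor+1$. For the inclusion $\mathcal{G}(n,p,1)\subseteq\mathcal{G}(n,p,q)$, the key observation is that $K_{p+1}=B_{p,1}$ is a subgraph of $B_{p,q}$ for every $q\geq 1$; hence every $K_{p+1}$-free graph is automatically $B_{p,q}$-free, so each $G\in\mathcal{G}(n,p,1)$ is a valid non-$p$-partite $B_{p,q}$-free graph. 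The theorem therefore reduces to the upper bound $e(H)\leq e(T_p(n))-\lfloor n/p\rfloor+1$ for every non-$p$-partite $B_{p,q}$-free graph $H$ on $n$ vertices.

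I would establish this bound by a dichotomy on whether $H$ contains a copy of $K_{p+1}$. If $H$ is $K_{p+1}$-free, Brouwer's Theorem \ref{thm0} applies directly. Otherwise, fix a copy of $K_{p+1}$ in $H$ on vertices $Q=\{v_0,\ldots,v_p\}$. For each index $i$, the $p$-clique $Q\setminus\{v_i\}$ already has $v_i$ as a common neighbor, so in order to avoid $B_{p,q}$ the common neighborhood of $Q\setminus\{v_i\}$ inside $V(H)\setminus(Q\setminus\{v_i\})$ must have independence number at most $q-1$. Applying this constraint over all $p+1$ choices of $i$ yields a Ramsey-type restriction on vertices $u\in V(H)\setminus Q$ with $d_Q(u)\geq p$; combined with the closeness of $H$ to $T_p(n)$ delivered by Simonovits's stability theorem for $B_{p,q}$, one concludes that all but $O_{p,q}(1)$ vertices $u\in V(H)\setminus Q$ satisfy $d_Q(u)\leq p-1$.

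Summing edges and invoking Simonovits's extremal theorem on the $B_{p,q}$-free induced subgraph $H[V(H)\setminus Q]$ (valid because $n$ is large), one obtains
$$
e(H)\leq\binom{p+1}{2}+(p-1)(n-p-1)+O_{p,q}(1)+e\bigl(T_p(n-p-1)\bigr).
$$
A direct expansion of $e(T_p(n-p-1))$ in terms of $e(T_p(n))$ gives $e(H)\leq e(T_p(n))-\tfrac{p-1}{p}n+O(1)$. For $p\geq 3$ we have $\tfrac{p-1}{p}>\tfrac{1}{p}$, so for $n$ sufficiently large this is strictly smaller than $e(T_p(n))-\lfloor n/p\rfloor+1$, contradicting the assumption that $H$ contains $K_{p+1}$. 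Hence $H$ must be $K_{p+1}$-free and the bound reduces to Brouwer's theorem.

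The main technical obstacle is the Ramsey-type step bounding the number of $u\in V(H)\setminus Q$ with $d_Q(u)\geq p$: the $B_{p,q}$-free hypothesis only controls the independence number of common neighborhoods, not their size, so one must combine the $p+1$ pigeonhole constraints coming from the vertices of $Q$ together with the near-$p$-partiteness of $H$ (obtained from Simonovits's stability theorem, since $e(H)$ is near $e(T_p(n))$) to argue that most common neighbors of a $p$-subset of $Q$ essentially lie in a single color class of a Tur\'an-like partition and are therefore largely independent, forcing their number to be bounded. The hypothesis $p\geq 3$ is exactly what makes the final comparison $\tfrac{p-1}{p}>\tfrac{1}{p}$ strict, so the proof genuinely requires $p\geq 3$.
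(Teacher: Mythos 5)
Your route is genuinely different from the paper's and, in outline, it is viable: you reduce the theorem to the upper bound $e(H)\le e(T_{n,p})-\lfloor n/p\rfloor+1$ for every $n$-vertex non-$p$-partite $B_{p,q}$-free graph $H$, treat the $K_{p+1}$-free case by Brouwer's theorem, and in the case that $H$ contains a copy $Q$ of $K_{p+1}$ you count edges through $Q$, bounding $e(H[V(H)\setminus Q])$ by $\mathrm{ex}(n-p-1,B_{p,q})=e(T_{p}(n-p-1))$ via Simonovits's critical-edge theorem; the arithmetic $e(H)\le e(T_{n,p})-\tfrac{p-1}{p}n+O_{p,q}(1)$ and the comparison $\tfrac{p-1}{p}>\tfrac1p$ for $p\ge 3$ are correct, and this is exactly where $p\geq3$ enters. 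The paper argues quite differently: it fixes an extremal graph, applies the Erd\H{o}s--Simonovits stability theorem, establishes degree and partition lemmas ($S$, $W_i$, $\overline{V}_i$), and runs an edge-switching procedure around a minimum-degree vertex to show the extremal graph equals some $G_{i_0,j_0}$, in particular is $K_{p+1}$-free, before invoking the comparison with $Y_r(n)$. Your approach is shorter and avoids stability entirely; the paper's approach yields structural information about the extremal graph that it then reuses in the spectral proof of Theorem \ref{thm1.2}.

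The one real weakness is the step you yourself flag as the ``main technical obstacle'' and leave as a sketch --- and that obstacle is illusory, caused by a misreading of the containment condition. Since $B_{p,q}=K_p\nabla qK_1$ is forbidden as a (not necessarily induced) subgraph, the $q$ pages need not be independent in the host graph; hence $B_{p,q}$-freeness bounds the \emph{size}, not merely the independence number, of the common neighbourhood of every $p$-clique: every copy of $K_p$ in $H$ has at most $q-1$ vertices outside it adjacent to all of its vertices. Consequently each $u\in V(H)\setminus Q$ with $d_Q(u)\ge p$ is a common neighbour of one of the $p+1$ many $p$-subsets of $Q$, so there are at most $(p+1)(q-1)$ such vertices; no Ramsey-type argument, no stability, and no near-$p$-partiteness is needed, and the implicit assumption that $e(H)$ is close to the Tur\'an number (which you needed only to invoke stability) can be dropped. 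With this correction your dichotomy argument closes completely (and the ``contradiction'' phrasing becomes simply the statement that any $H$ containing $K_{p+1}$ falls strictly below $e(Y_r(n))$); as written, however, the unproved stability-based workaround is the gap you would have to fill.
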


\begin{thm}\label{thm1.2}
Let $p\geq 3$, $q\geq 1$ and $n$ be sufficiently large. If $G$ is an $n$-vertex $B_{p,q}$-free graph then $\rho(G)\le \rho(Y_r(n))$,  with equality holds if and only if $G\cong Y_r(n)$.
\end{thm}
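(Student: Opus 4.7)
Fix an extremal graph $G^{*}\in\mathrm{SPEX}_{r+1}(n,B_{p,q})$ (with $r=p$, the value matched to the $(p+1)$-chromatic nature of $B_{p,q}$). Because $Y_{r}(n)$ is itself non-$r$-partite and $K_{p+1}$-free, and $K_{p+1}\subseteq B_{p,q}$ for every $q\ge 1$, we have the baseline lower bound
\[
\rho(G^{*})\ \ge\ \rho(Y_{r}(n))\ \ge\ \bigl(1-\tfrac{1}{p}\bigr)n-O(1).
\]
The plan is to match this with the corresponding upper bound and isolate the equality case by a stability-plus-local-modification argument in the spirit of Li--Peng and Wang--Kang--Xue.

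First I would invoke a Simonovits/Nikiforov-type spectral stability theorem for the $(p+1)$-chromatic family $\{B_{p,q}\}$: using $\rho(G^{*})\ge (1-1/p)n-O(1)$, conclude that $G^{*}$ is within edit distance $o(n^{2})$ of $T_{n,p}$, i.e.\ there is a nearly balanced partition $V(G^{*})=V_{1}\cup\cdots\cup V_{p}$ with $o(n^{2})$ intra-part edges and $o(n^{2})$ inter-part non-edges. Then, letting $\mathbf{x}$ denote the positive Perron eigenvector scaled so that $\max_{v}x_{v}=1=x_{z^{*}}$, I would exploit the eigenequation $\rho(G^{*})x_{v}=\sum_{u\sim v}x_{u}$ together with the stability partition to carve out a set $L\subseteq V(G^{*})$ of size $n-O(1)$ on which the entries $x_{v}$ concentrate near a common value and every $v\in L\cap V_{i}$ is adjacent to all but $O(1)$ vertices of $V\setminus V_{i}$. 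The complement $V\setminus L$ consists of a bounded number of ``exceptional'' vertices whose attachment is tightly restricted both by $B_{p,q}$-freeness (which bounds the independence number of the common neighbourhood of any $K_{p}$) and by the fact that $G^{*}$ is spectrally maximal.

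The final stage uses this Perron concentration to force $G^{*}$ onto the exact form of $Y_{r}(n)$: first show that $G^{*}[L]$ is a nearly balanced complete $p$-partite graph, because any intra-part edge or inter-part non-edge lying inside $L$ could be switched (delete it, then insert a high-weight replacement at $z^{*}$) in a way that strictly increases $\rho$ while preserving both hypotheses. Since $G^{*}$ is non-$r$-partite, at least one exceptional vertex $u_{0}\notin L$ must carry the odd obstruction, and a finite case analysis on the attachment pattern of $u_{0}$ (and of any further exceptional vertex) rules out every candidate except the configuration defining $Y_{r}(n)$: namely $u_{0}$ joined to $V_{3}\cup\cdots\cup V_{p}$ plus exactly one vertex $u_{1}\in V_{1}$ and one $u_{2}\in V_{2}$, with the unique missing cross-edge $u_{1}u_{2}$. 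The main obstacle will be this case analysis in the final stage: each candidate local swap must be verified (a) to not introduce a $B_{p,q}$, which requires bounding the independence number of the common neighbourhoods of the newly formed $K_{p}$-cliques using the Perron concentration from the previous stage, and (b) to not collapse the non-$r$-partite structure, which is handled by first localizing the odd defect onto the unique exceptional vertex $u_{0}$ and then operating away from it.
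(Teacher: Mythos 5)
Your opening moves (the lower bound $\rho(G^{*})\ge\rho(Y_r(n))$, spectral stability in the sense of Nikiforov and Desai et al., Perron-weight concentration, local modifications) coincide with the first half of the paper's argument, but your endgame is where the genuine gaps are. The paper never performs a direct case analysis terminating in $Y_r(n)$: after the stability structure it takes the vertex $u'$ of \emph{minimum} Perron weight, proves $\chi(G-u')=p$ by a delete-and-reattach move (this is the step that makes modifications legitimate without destroying non-$p$-partiteness), runs a rewiring procedure on $N(u')$ and shows no rewiring actually occurs, concludes $G\cong G_{i_0,j_0}$ (a complete $p$-partite graph minus one cross edge with $u'$ attached accordingly), and only then finishes by quoting Li--Peng's Theorem \ref{thm1.0}, which supplies both the exact balancing of the part sizes and the uniqueness of $Y_r(n)$. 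You propose to redo this entire final stage by hand and explicitly leave the case analysis as an ``obstacle''; in particular, you assert rather than derive that the exceptional set has size $O(1)$ (stability and degree counting only give $O(\eta n)$ low-degree vertices, and shrinking this to a single defect vertex is exactly what the $\chi(G-u')=p$ recoloring lemma accomplishes), you do not explain how the odd obstruction gets localized onto one vertex, and you never address the exact balancing $\bigl||V_i|-\frac{n-1}{p}\bigr|\le 1$ or the uniqueness claim, which in the paper come for free from Theorem \ref{thm1.0}.

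Two of your concrete steps are also unsound as stated. First, the claim that every inter-part non-edge inside $L$ can be switched away with a strict increase of $\rho$ while preserving $B_{p,q}$-freeness and non-$p$-partiteness contradicts the target graph itself: in $Y_r(n)$ the vertices $u_1,u_2$ miss only each other across parts (so they lie in any reasonable $L$), yet adding $u_1u_2$ creates a $K_p$ on $u_0,u_1,u_2$ together with one vertex from each of $p-3$ further parts whose common neighbourhood contains an entire remaining part, hence a copy of $B_{p,q}$; so no such switch exists, and ``$G^{*}[L]$ is complete $p$-partite'' cannot be established this way (the paper instead proves only that each $G[\overline{V}_i]$ is empty, by directly embedding $B_{p,q}$ from an intra-part edge via Lemma \ref{lem2.6}(c), and treats missing cross-edges through the constrained Procedure \ref{proc1}). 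Second, $B_{p,q}$-containment (as a subgraph) is equivalent to the existence of a $K_p$ with at least $q$ common neighbours; bounding the \emph{independence number} of the common neighbourhood, as you propose when verifying that a swap creates no $B_{p,q}$, is not sufficient, since the $q$ pages need not be independent in the host graph. These points need to be repaired, or else the endgame should be outsourced, as in the paper, to the known $K_{p+1}$-free non-$p$-partite spectral theorem.
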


Note that $B_{p,q}$ contains $K_{p+1}$ as a subgraph. So Theorem \ref{thm1.2} means that Theorem \ref{thm1.0} holds for sufficiently large $n$.

\section{Preliminaries}
In this section, we first present some lemmas that will be used in the following proof. For a given graph $H$, the minimum integer $k$ for which $H$ is $k$-colourable is called its \emph{chromatic number}, and denoted by $\chi(H)$.
A graph $H$ is called an \emph{edge-color-critical graph}, if there exists an edge $e\in E(H)$
such that $\chi(H-e)<\chi(H)$. The following result, due to Simonovits, plays an important role in extremal graph theory.

 \begin{lem}\label{lem2.1}\emph{(Simonovits, \cite{S1966})}
For $r\geq 2$ and sufficiently large $n$, if $H$ is an edge-color-critical graph with $\chi(H)=r+1$,
then the Tur\'an graph $T_{n,r}$ is the unique extremal graph with respect to ${\rm ex}(n,H)$.
 \end{lem}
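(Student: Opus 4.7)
The plan is to combine the Erd\H{o}s--Stone--Simonovits asymptotic with the Erd\H{o}s--Simonovits stability theorem, and then upgrade stability to an exact equality by a cleaning argument driven by the edge-critical hypothesis. Since $\chi(T_{n,r})=r<r+1=\chi(H)$, the Tur\'an graph $T_{n,r}$ is itself $H$-free, so $\text{ex}(n,H)\ge e(T_{n,r})$. On the other hand, Erd\H{o}s--Stone--Simonovits gives $\text{ex}(n,H)=\bigl(1-\tfrac{1}{r}+o(1)\bigr)\binom{n}{2}$, which matches $e(T_{n,r})$ asymptotically. The task is therefore to show that any extremal $H$-free graph $G$ on $n$ vertices must actually be isomorphic to $T_{n,r}$.

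Let $G$ be such an extremal graph. By the Erd\H{o}s--Simonovits stability theorem, for every $\varepsilon>0$ there exists $\delta>0$ such that, for $n$ large, $G$ can be turned into $T_{n,r}$ by adding and removing at most $\varepsilon n^2$ edges. Fix an $r$-partition $V(G)=V_1\cup\cdots\cup V_r$ maximizing the number of crossing edges; stability forces $|V_i|=(1/r+o(1))n$ and the number of ``bad'' edges inside the parts to be $o(n^2)$. In particular, all but $o(n)$ vertices are \emph{typical}, meaning they have at most $\eta n$ non-neighbors in the other parts and at most $\eta n$ neighbors in their own part, for any prescribed small $\eta>0$.

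The core step is to remove all bad edges by exploiting the edge-critical hypothesis. Write $H=H'+e_0$ with $e_0=xy$ and $\chi(H')\le r$; fix a proper $r$-coloring of $H'$ in which $x,y$ share a color class. If $G$ has a bad edge $uv\in V_i$ where both $u,v$ are typical, then one embeds $H'$ into $G$ by placing $x\mapsto u,\ y\mapsto v$, and greedily embedding the remaining vertices of $H'$ into $V_1,\dots,V_r$ according to their color, using that typical vertices in every class have joint neighborhoods of size $(1-O(\eta))n$ in every other class. This produces a copy of $H$ in $G$, a contradiction; hence every bad edge must be incident to an atypical vertex. Next I would remove (or reassign) atypical vertices one at a time: because $G$ is extremal and the count $e(T_{n,r})-e(T_{n-1,r})$ is well understood, any vertex of degree much below $(1-1/r)n$ can be excised and the argument iterated on the $H$-free graph $G-v$, contradicting extremality unless no atypical vertices exist at all.

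Once $G$ is genuinely $r$-partite, the same edge-critical embedding argument shows $G$ must be \emph{complete} $r$-partite: a missing crossing edge $uv$ between typical vertices in distinct parts could otherwise be added without creating $H$ (since any copy of $H$ through $uv$ would, after deleting $uv$, give an $r$-partite copy of the $(r{+}1)$-chromatic graph $H-e_0$, impossible). Finally, among all complete $r$-partite graphs on $n$ vertices, $T_{n,r}$ uniquely maximizes the edge count by balancing part sizes, so $G\cong T_{n,r}$. The main obstacle is the cleaning phase: one must control the atypical vertices carefully enough to rule them out entirely, since the edge-critical embedding trick works cleanly only when its anchor vertices have near-complete joint neighborhoods across the other parts. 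This is exactly where the hypothesis that $n$ is sufficiently large enters, absorbing the error terms from stability.
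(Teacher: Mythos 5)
The paper offers no proof of this lemma at all --- it is quoted from Simonovits --- so the only comparison is with the classical argument. Your route (Erd\H{o}s--Stone asymptotics plus Erd\H{o}s--Simonovits stability, then a critical-edge embedding to eliminate edges inside parts, then a degree-cleaning step, then completion and balancing) is the standard modern proof of the chromatic critical edge theorem, and it is essentially the same machinery this paper itself redeploys to prove its Theorem \ref{thm1.1}. So the strategy is sound.

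Two steps are genuinely incomplete as written, though. First, your cleaning phase excises only vertices of low degree, but ``atypical'' also includes vertices of nearly full degree that have at least $\eta n$ neighbours inside their own part; a degree-versus-$e(T_{n,r})-e(T_{n-1,r})$ argument does not touch them, and your embedding step, which requires \emph{both} endpoints of a bad edge to be typical, does not cover bad edges incident to such vertices. The standard repair --- exactly what the paper does in Lemma \ref{lem2.6}(a) and Lemma \ref{lem2.7} for the analogous situation --- is to take the $r$-partition maximizing the number of crossing edges, so every vertex $v$ has at most $d(v)/r$ neighbours in its own part and hence, if $d(v)\ge(1-\frac1r-O(\eta))n$, at least $(\frac{1}{r^2}-O(\eta))n$ neighbours in \emph{every} other part; one can then anchor the $H'$-embedding at a bad edge with one such endpoint and conclude that these vertices must in fact lie in the low-degree set. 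Second, the excision step cannot be run one vertex at a time against ${\rm ex}(n-1,H)$: a priori you only know ${\rm ex}(n-1,H)$ up to an $o(n^2)$ error, which swamps the $O(n)$ gain from a single deletion. You must either delete low-degree vertices iteratively while tracking the accumulated surplus (so that many deletions would force a subgraph of edge density exceeding $1-\frac1r$ by a constant, contradicting Erd\H{o}s--Stone), or argue by induction on $n$ with the exact statement, as in Simonovits' progressive induction. Finally, a small slip in the completion step: $H-e_0$ is $r$-chromatic, not $(r+1)$-chromatic; the clean statement is simply that adding a missing crossing edge keeps $G$ $r$-partite, so $\chi(G+uv)\le r<\chi(H)$ and no copy of $H$ can be created, contradicting extremality.
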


Erd\H{o}s \cite{Erdos-1967,Erdos-1968}
and Simonovits \cite{S1966} posed the classical stability theorem as follows.

\begin{lem} \label{lem2.2}
\emph{(Erd\H{o}s and  Simonovits, \cite{Erdos-1967,Erdos-1968,S1966})}
Let $H$ be a given graph with $\chi(H)=r+1$ and $G$ be an $n$-vertex $H$-free graph.
For every $\varepsilon>0$ and $r\geq 2$,
there exist a constant $\delta>0$ and an integer $n_0$ such that
if $n\geq n_0$ and $e(G)\geq (\frac{r-1}{r}-\delta)\frac{n^2}{2}$,
then $G$ differs from $T_{n,r}$ at most $\varepsilon n^2$ edges.
\end{lem}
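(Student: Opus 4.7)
The plan is to reduce the lemma to the Erd\H{o}s--Stone--Simonovits theorem via a vertex-cleaning step followed by a structural analysis of an optimal $r$-partition. Since $\chi(H)=r+1$, Erd\H{o}s--Stone--Simonovits yields $\mathrm{ex}(n,H)=(1-\tfrac{1}{r}+o(1))\tfrac{n^{2}}{2}$, and there exists $t=t(H)$ such that $H\subseteq K_{r+1}(t,\dots,t)$. I argue by contradiction: assume the lemma fails for some $\varepsilon_0>0$, so that there is a sequence of $H$-free graphs $G_k$ on $n_k\to\infty$ vertices with $e(G_k)\ge(\tfrac{r-1}{r}-\delta_k)\tfrac{n_k^{2}}{2}$ where $\delta_k\to 0$, yet whose edge-edit distance from $T_{n_k,r}$ exceeds $\varepsilon_0 n_k^{2}$.

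First I clean $G_k$ by iteratively deleting any vertex of degree less than $(\tfrac{r-1}{r}-\sqrt{\delta_k})n_k$. The given lower bound on $e(G_k)$ forces the process to terminate after removing only $o(n_k)$ vertices, leaving an $H$-free subgraph $G'_k$ on $m_k=(1-o(1))n_k$ vertices with minimum degree at least $(1-\tfrac{1}{r}-o(1))m_k$. I then fix a partition $V_1,\dots,V_r$ of $V(G'_k)$ maximizing the number of crossing edges. The minimum-degree condition together with the local-switching optimality of this partition give $|V_i|=(1+o(1))m_k/r$ for every $i$, and a standard double count using the same minimum degree yields $\sum_i e(G'_k[V_i])=o(m_k^{2})$.

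The main obstacle is to establish that between any two parts $V_i,V_j$ only $o(m_k^{2})$ edges are missing. If $\Omega(m_k^{2})$ edges were missing between some pair, then taking dyadic pigeonhole over the $r-2$ remaining parts would produce vertices in $V_i\cup V_j$ with large common neighborhoods across these parts. Applying the Erd\H{o}s--Stone theorem to the dense subgraph induced on an appropriately chosen joint neighborhood then embeds $K_{r+1}(t,\dots,t)\supseteq H$ into $G'_k$, contradicting $H$-freeness. Combining the three structural estimates---balanced part sizes, few edges inside parts, and few non-edges across parts---shows that $G'_k$ differs from $T_{m_k,r}$ by at most $o(m_k^{2})$ edges.

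Finally, restoring the $o(n_k)$ deleted vertices contributes only $o(n_k^{2})$ additional modifications, so the edit distance between $G_k$ and $T_{n_k,r}$ is $o(n_k^{2})$, contradicting the assumption that it exceeds $\varepsilon_0 n_k^{2}$ once $k$ is large. Unwinding the contrapositive gives the $\delta>0$ and $n_0$ as required.
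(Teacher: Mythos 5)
The paper gives no proof of this lemma (it is quoted from Erd\H{o}s and Simonovits), so your sketch stands on its own, and it has a genuine gap at its core. You claim that after the degree-cleaning step, the minimum-degree condition together with local-switching optimality of a max-cut $r$-partition already yield the balance $|V_i|=(1+o(1))m_k/r$ and the bound $\sum_i e(G'_k[V_i])=o(m_k^2)$, deferring all use of $H$-freeness to the cross-edge step. That implication is false, because it never invokes $H$-freeness and hence would have to hold for every graph with large minimum degree and the stated edge count: already $G'=K_m$ has minimum degree $m-1$ and more than $(\frac{r-1}{r}-\delta)\frac{m^2}{2}$ edges, yet every maximum $r$-cut of $K_m$ is balanced with $\Theta(m^2)$ edges inside the parts, so the ``standard double count'' you appeal to cannot give $\sum_i e(G'_k[V_i])=o(m_k^2)$. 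Balance fails too: for $r=2$, take $A$ of size $0.7m$ spanning a disjoint union of cliques of size at most $m/4$, completely joined to an independent set $B$ of size $0.3m$; this graph has minimum degree at least $m/2-1$ and about $0.29m^2$ edges, while one checks its maximum cut is exactly the unbalanced bipartition $(A,B)$, with $\Theta(m^2)$ internal edges. So the two structural facts you treat as routine are precisely where $H$-freeness must be used; they are the substance of the stability theorem, not consequences of degree plus cut optimality.

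The one place you do use $H$-freeness is also misdirected. Missing cross edges between $V_i$ and $V_j$ do not by themselves create any subgraph, so ``dyadic pigeonhole plus Erd\H{o}s--Stone on a joint neighborhood'' cannot be run on the non-edges directly; at best, many missing cross edges combined with the minimum degree force many vertices to have $\Omega(m_k)$ neighbors inside their own part, i.e.\ they reduce step (c) back to step (b). Conversely, once balance and few internal edges are known, (c) is immediate by counting: the number of cross pairs is at most $(1-\frac1r)\frac{m_k^2}{2}+o(m_k^2)$, while the edge lower bound forces almost all of them to be edges. The genuine work, which your sketch does not supply, is to show that $\Omega(m_k^2)$ edges inside the parts (equivalently, a positive proportion of vertices with linearly many neighbors in their own class) forces a copy of $K_{r+1}(t,\dots,t)\supseteq H$; this needs a supersaturation/common-neighborhood embedding argument (or the removal lemma, or F\"uredi-type / progressive-induction arguments as in the original papers), not a single application of Erd\H{o}s--Stone to one dense neighborhood. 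As written, the proposal does not prove the lemma.
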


The following lemma is easy to check by double counting.
\begin{lem} \label{counting}
If $S_1,\ldots,S_k$ are $k$ finite sets, then $|S_1\cap\ldots\cap S_k|\ge
\sum\limits_{i=1}^k|S_i|-(k-1)|\bigcup\limits_{i=1}^k S_i|$.
\end{lem}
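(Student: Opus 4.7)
The plan is to prove this via complementation inside the union and then the standard union bound. Let me set $U=\bigcup_{i=1}^{k}S_i$ and work relative to $U$ as the ambient universe; writing $U\setminus S_i$ for the complement of $S_i$ in $U$, the statement becomes equivalent (after a little rearranging) to bounding $|U|-|\bigcap_{i=1}^k S_i|$ from above by $k|U|-\sum_{i=1}^k|S_i|$.

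First, I would invoke De Morgan's law inside $U$:
\[
U\setminus \bigcap_{i=1}^k S_i \;=\; \bigcup_{i=1}^k\bigl(U\setminus S_i\bigr),
\]
which immediately gives $|U|-|\bigcap_{i=1}^k S_i|=\bigl|\bigcup_{i=1}^k(U\setminus S_i)\bigr|$. Then I would apply the elementary union bound $|\bigcup_{i=1}^k A_i|\le\sum_{i=1}^k |A_i|$ to the sets $A_i:=U\setminus S_i$, obtaining
\[
|U|-\Bigl|\bigcap_{i=1}^k S_i\Bigr|\;\le\;\sum_{i=1}^k |U\setminus S_i|\;=\;k|U|-\sum_{i=1}^k|S_i|.
\]
Rearranging yields $|\bigcap_{i=1}^k S_i|\ge \sum_{i=1}^k|S_i|-(k-1)|U|$, which is the desired inequality.

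There is really no obstacle here; the only thing worth double-checking is that the inequality is vacuous (and still correct) when the right-hand side is negative, since the left-hand side is a nonnegative integer. Alternatively, one could give a one-line double-counting proof by summing the multiplicities $d(x):=|\{i:x\in S_i\}|$ over $x\in U$: $\sum_{x\in U}d(x)=\sum_{i=1}^k|S_i|$, while $d(x)\le k-1$ for every $x\notin \bigcap_{i=1}^kS_i$ and $d(x)\le k$ otherwise, so $\sum_{i=1}^k|S_i|\le (k-1)(|U|-|\bigcap_i S_i|)+k|\bigcap_i S_i|=(k-1)|U|+|\bigcap_i S_i|$, giving the same bound. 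I would present the De Morgan/union-bound version since it is shortest and matches the "double counting" hint in the lemma's lead-in.
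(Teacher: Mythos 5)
Your argument is correct, and it is essentially the paper's (omitted) argument: the paper simply asserts the lemma is "easy to check by double counting," and your second, one-line proof via summing the multiplicities $d(x)=|\{i:x\in S_i\}|$ over $x\in\bigcup_i S_i$ is exactly that double count, while your De Morgan/union-bound version is just an equivalent rephrasing of it. No gaps; either presentation would serve as the proof the paper leaves out.
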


\section{Proof of Theorem \ref{thm1.1}}\label{section4}

In this section, we provide the proof of Theorem \ref{thm1.1}.
Let $G$ be a graph in $\mathcal{G}(n,p,q)$ with its minimum degree $\delta(G)=\min_{F\in \mathcal{G}(n,p,q)}\delta(F)$.
We always assume that $n$ is large enough.
Since $Y_r(n)$ is $K_{p+1}$-free and $B_{p,q}$ contains a subgraph isomorphic to $K_{p+1}$, $Y_r(n)$ is $B_{p,q}$-free.
By the choice of $G$, we have
\begin{align}\label{align-2a}
e(G)\geq e(Y_r(n))=e(T_{n,p})-\Big\lfloor\frac{n}{p}\Big\rfloor+1.
\end{align}

If $\chi(G)\leq p-1$, then by Tur\'an's theorem, we have $e(G)\leq e(T_{n,p-1})<e(T_{n,p})-\big\lfloor\frac{n}{p}\big\rfloor+1$,
which contradicts \eqref{align-2a}.
Furthermore, since $G$ is non-$p$-partite, we have $\chi(G)\geq p+1$.

For a subset $U$ of $V(G)$, denote by $e(U)$ the number of edges with all its ends in $U$ and $G[U]$ the induced subgraph by $U$. For two disjoint subset $U_1,U_2$ of $V(G)$, denote by $e(U_1,U_2)$ the number of edges with one end in $U_1$ and the other in $U_2$. For convenience, we define
\begin{align}\label{align-1}
 0<\eta<\frac{1}{7p^5q}, ~0\le \varepsilon<\eta^3,
\end{align}
which will be used frequently in the sequel.

\begin{lem}\label{lem2.3}
$G$ admits a vertex partition $V(G)=\bigcup_{i=1}^{p}V_i$ such that
$\sum_{i=1}^{p}e(V_i)\leq \eta^3 n^2$.
Furthermore, for any partition $\bigcup_{i=1}^{p}V_i$ with $\sum_{i=1}^{p}e(V_i)\leq \eta^3 n^2$, we have $\big||V_i|-\frac{n}{p}\big|\leq\eta n$ for each $i\in [p]$.
\end{lem}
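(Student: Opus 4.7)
The plan is to derive Lemma \ref{lem2.3} from the Erd\H{o}s--Simonovits stability theorem (Lemma \ref{lem2.2}) applied with forbidden graph $H=B_{p,q}$. First I would verify the chromatic hypothesis: since $B_{p,q}=K_p\nabla qK_1$ contains $K_{p+1}$ we have $\chi(B_{p,q})\geq p+1$, and the proper colouring assigning $\{1,\dots,p\}$ to the clique and colour $p+1$ to every vertex of $qK_1$ shows $\chi(B_{p,q})=p+1$; moreover $B_{p,q}$ is edge-colour-critical, since after deleting any edge inside the $K_p$ its two endpoints can share a colour, the remaining $p-2$ clique vertices take $p-2$ further colours, and all $q$ independent vertices take one more colour, giving a proper $p$-colouring. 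Simonovits' theorem (Lemma \ref{lem2.1}) therefore yields $\mathrm{ex}(n,B_{p,q})=e(T_{n,p})$ for large $n$, so by \eqref{align-2a} the graph $G$ is edge-extremal up to an additive linear error.

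Next, choosing $\varepsilon:=\eta^3$ (admissible by \eqref{align-1}) I apply Lemma \ref{lem2.2}. Since $e(G)\geq e(T_{n,p})-\lfloor n/p\rfloor+1\geq\bigl(\tfrac{p-1}{p}-\delta\bigr)\tfrac{n^2}{2}$ holds for every fixed $\delta>0$ and all large $n$, the stability theorem produces a bijection $\phi\colon V(G)\to V(T_{n,p})$ whose edge-symmetric-difference has size at most $\eta^3 n^2$. Pulling back the canonical $p$-partition of $T_{n,p}$ via $\phi$ gives a partition $V(G)=\bigcup_{i=1}^{p}V_i$ in which every intra-part edge of $G$ corresponds to an edge absent from $T_{n,p}$, so $\sum_{i=1}^{p}e(V_i)\leq\eta^3 n^2$. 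This establishes the existence claim.

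For the ``furthermore'' statement, let $V(G)=\bigcup_{i=1}^{p}V_i$ be any partition with $\sum e(V_i)\leq\eta^3 n^2$, and set $n_i=|V_i|$ and $t_i=n_i-n/p$. Deleting the intra-part edges leaves a subgraph of $K_{n_1,\dots,n_p}$, so
\[
e(K_{n_1,\dots,n_p})\geq e(G)-\sum_{i=1}^{p}e(V_i)\geq e(T_{n,p})-\lfloor n/p\rfloor+1-\eta^3 n^2.
\]
A direct convexity computation gives $e(T_{n,p})-e(K_{n_1,\dots,n_p})=\tfrac12\bigl(\sum n_i^2-\sum (n_i^*)^2\bigr)\geq\tfrac12\sum t_i^2-O(1)$, where $n_i^*$ denote the balanced part sizes of $T_{n,p}$. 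Combining these bounds yields $\sum t_i^2\leq 3\eta^3 n^2$ for all sufficiently large $n$; if some $|t_j|>\eta n$ then $t_j^2>\eta^2 n^2>3\eta^3 n^2$ (using $\eta<1/(7p^5 q)<1/3$), a contradiction, so $|n_i-n/p|\leq\eta n$ for every $i\in[p]$.

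I do not expect a serious obstacle: the existence half is a direct invocation of Erd\H{o}s--Simonovits stability once $B_{p,q}$ is recognised as edge-colour-critical, and the ``furthermore'' half is the standard convexity estimate showing that near-extremal Tur\'an-type partitions must be almost balanced. The only mildly delicate bookkeeping is keeping the linear slack $\lfloor n/p\rfloor-1$ negligible beside the quadratic error term $\eta^3 n^2$, which is automatic once $n$ is taken sufficiently large.
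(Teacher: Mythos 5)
Your proposal is correct and follows essentially the same route as the paper: the existence half is the same direct application of the Erd\H{o}s--Simonovits stability theorem (Lemma \ref{lem2.2}) with the Tur\'an partition pulled back, using the lower bound \eqref{align-2a} on $e(G)$. The ``furthermore'' half differs only cosmetically --- you bound $\sum_i\bigl(|V_i|-\tfrac{n}{p}\bigr)^2$ by comparing $e(K_{n_1,\dots,n_p})$ with $e(T_{n,p})$, while the paper bounds the maximum deviation $\gamma$ via Cauchy--Schwarz --- and both are the same standard convexity estimate, so there is no gap (your extra verification that $B_{p,q}$ is edge-colour-critical is not needed for this lemma, only $\chi(B_{p,q})=p+1$ is).
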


\begin{proof}
By \eqref{align-2a} and Lemma \ref{lem2.2}, $G$ can be obtained from $T_{n,p}$ by adding and deleting at most $\varepsilon n^2$ edges. It follows that
there exists a vertex partition $V(G)=\bigcup_{i=1}^{p} V_i^*$ such that $\big\lfloor\frac{n}{p}\big\rfloor\leq |V^*_i|\leq \big\lceil\frac{n}{p}\big\rceil$ and
$\sum_{i=1}^{p}e(V^*_i)\leq \eta^3 n^2$ for each $i\in [p]$.

Now we will show that for any vertex partition $V(G)=\bigcup_{i=1}^{p}V_i$
satisfying that $\sum\limits_{i=1}^{p}e(V_i)\leq \eta^3 n^2$, we have
$\big||V_i|-\frac{n}{p}\big|\leq\eta n$ for each $i\in [p]$.
Set $\gamma=\max\{\big||V_i|-\frac{n}{p}\big|,~i\in [p]\}$. Without loss of generality, say $\gamma=||V_1|-\frac{n}{p}|$.
From the Cauchy-Schwarz inequality, we have $\sum_{i=2}^{p}|V_i|^2\geq \frac{1}{p-1}(\sum_{i=2}^{p}|V_i|)^2$.
Then
   $$2\sum_{2\leq i<j\leq p}|V_i||V_j|=\Big(\sum_{i=2}^{p}|V_i|\Big)^2-\sum_{i=2}^{p}|V_i|^2
   \leq \frac{p-2}{p-1}(n-|V_1|)^2.$$
Consequently,
\begin{eqnarray*}
e(G)&=& \sum_{1\leq i<j\leq p}e(V_i,V_j)+\sum_{i=1}^{p}e(V_i)\\
&\leq& |V_1|(n-|V_1|)+\sum_{2\leq i<j\leq p}|V_i||V_j|+\eta^3 n^2\\
&\leq& |V_1|(n-|V_1|)+\frac{p-2}{2(p-1)}(n-|V_1|)^2+\eta^3 n^2\\
&=& -\frac{p}{2(p-1)}\gamma^2+\frac{p-1}{2p}n^2+\eta^3 n^2.
\end{eqnarray*}

On the other hand, by Lemma \ref{lem2.2},
\begin{align}\label{align-1.1}
   e(G)\geq e(T_{n,p})-\eta^3 n^2>\big(\frac{p-1}{2p}-2\eta^3\big) n^2.
\end{align}
Combining the above two inequalities gives $\frac{p}{2(p-1)}\gamma^2<3\eta^3 n^2$.
Thus, $\gamma<\sqrt{\frac{6(p-1)\eta^3}{p}n^2}<\eta n$.
\end{proof}

For a vertex $v$ and a vertex subset $X$ of $G$, let $N_X(v)$ be the set of neighbors of $v$ in $X$ and $d_X(v)=|N_X(v)|$. Specially, if $X=V(G)$, then we write $N_G(v)$ ($d_G(v)$, respectively) for short.

By Lemma \ref{lem2.3}, we assume that $\bigcup_{i=1}^{p}V_{i}$ is a vertex partition of $G$ such that
$\sum_{i=1}^{p}e(V_i)$ attains the minimum. We now define two subsets of vertex of $G$ and then characterize their properties in following lemmas.
 Let $S=\{v\in V(G)~|~d_G(v)\leq \big(\frac{p-1}{p}-5\eta\big)n\}$, $W_i=\{v\in V_i~|~d_{V_i}(v)\geq 2\eta n\}$ and $W=\bigcup_{i=1}^{p}W_i$. Then we can obtain the following statements.

\begin{lem}\label{lem2.4}
$|S|\leq \eta n$.
\end{lem}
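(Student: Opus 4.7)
The plan is to derive a double-counting inequality that bounds $|S|$ in terms of the number of non-edges across the partition $\{V_1,\ldots,V_p\}$ supplied by Lemma~\ref{lem2.3}, and then control that cross-part deficit using the lower bound $e(G)\ge e(Y_r(n))$ coming from \eqref{align-2a}. The point is that $S$-vertices are precisely the ones which ``should not exist'' in a graph close to $T_{n,p}$, so each of them must contribute a genuinely missing bundle of cross-part edges.

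First I would show that each $v\in S\cap V_i$ lacks at least $4\eta n$ potential neighbors outside $V_i$. By Lemma~\ref{lem2.3}, $|V_i|\le \frac{n}{p}+\eta n$, so the maximum cross-part degree available to $v$ is $n-|V_i|\ge \frac{(p-1)n}{p}-\eta n$; on the other hand the definition of $S$ gives $\sum_{j\neq i}d_{V_j}(v)\le d_G(v)\le \bigl(\frac{p-1}{p}-5\eta\bigr)n$. Subtracting, $v$ is missing at least $4\eta n$ edges to $V(G)\setminus V_i$. Since each cross-part non-edge has at most two endpoints in $S$, summing over all $v\in S$ yields
\[
4\eta n\cdot|S|\ \le\ 2\sum_{1\le i<j\le p}\bigl(|V_i||V_j|-e(V_i,V_j)\bigr).
\]

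Next I would bound the right-hand side. Writing $\sum_{i<j}e(V_i,V_j)=e(G)-\sum_i e(V_i)$, the expression in parentheses equals $\sum_{i<j}|V_i||V_j|-e(G)+\sum_i e(V_i)$. Three ingredients now apply: Lemma~\ref{lem2.3} provides $\sum_i e(V_i)\le \eta^3 n^2$; the classical fact that $\sum_{i<j}n_in_j$ over partitions of $n$ into $p$ parts is maximized at the most balanced one gives $\sum_{i<j}|V_i||V_j|\le e(T_{n,p})$; and \eqref{align-2a} gives $e(G)\ge e(T_{n,p})-\lfloor n/p\rfloor+1$. Combining yields $\sum_{i<j}(|V_i||V_j|-e(V_i,V_j))\le \eta^3 n^2+\lfloor n/p\rfloor-1$, and hence $|S|\le \frac{\eta^2 n}{2}+\frac{1}{2p\eta}$, which is strictly less than $\eta n$ for $n$ large, since $\eta<1$ by \eqref{align-1}.

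The delicate step is the first one: the $5\eta$ slack in the definition of $S$ must strictly beat the $\eta$ slack in the near-balance of $|V_i|$ coming from Lemma~\ref{lem2.3}, so that each $v\in S$ contributes a deficit that is \emph{linear} in $n$ with coefficient of order $\eta$. The constants chosen in \eqref{align-1} make this comfortable, but if Lemma~\ref{lem2.3} were any weaker the threshold ``$5\eta$'' in the definition of $S$ would need to be retuned accordingly. No further properties of $B_{p,q}$-freeness are needed here; Lemma~\ref{lem2.4} is essentially a structural consequence of \eqref{align-2a} together with the partition of Lemma~\ref{lem2.3}.
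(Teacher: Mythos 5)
Your proof is correct, but it follows a genuinely different route from the paper. You bound $|S|$ by double counting: each $v\in S\cap V_i$ misses at least $4\eta n$ cross-part edges (the $5\eta$ degree slack against the $\eta$ imbalance slack of Lemma~\ref{lem2.3}), each cross-part non-edge is counted at most twice, and the total cross-part deficit is at most $\eta^3 n^2+\lfloor n/p\rfloor-1$ because $\sum_{i<j}|V_i||V_j|\le e(T_{n,p})$ while \eqref{align-2a} puts $e(G)$ only $\lfloor n/p\rfloor-1$ edges below $e(T_{n,p})$; this even gives the stronger bound $|S|=O(\eta^2 n)$. The paper instead argues by contradiction: it deletes a set $S'\subseteq S$ of $\lfloor\eta n\rfloor$ low-degree vertices, checks via the coarse density bound \eqref{align-1.1} that $e(G-S')>e(T_{n',p})$ with $n'=n-\lfloor\eta n\rfloor$, and then invokes Simonovits' chromatic critical edge theorem (Lemma~\ref{lem2.1}, using that $B_{p,q}$ is edge-color-critical with $\chi=p+1$) to produce a forbidden copy of $B_{p,q}$. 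The trade-off: your argument is more elementary (no second appeal to Simonovits, and $B_{p,q}$-freeness enters only through Lemma~\ref{lem2.3} and \eqref{align-2a}) but leans on the full strength of the tight bound \eqref{align-2a}, whereas the paper's deletion argument needs only the quadratic-order bound \eqref{align-1.1} and no partition at all, so it would survive even if the edge lower bound were weakened by $o(n^2)$. Both are valid proofs of the lemma.
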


\begin{proof}
Suppose to the contrary that $|S|>\eta n$.
Then we choose a subset $S'\subseteq S$ such that $|S'|=\lfloor\eta n\rfloor$.
Let $n'=|V(G-S')|=n-\lfloor\eta n\rfloor$. By (\ref{align-1.1}), we have
\begin{align*}
 e(G-S')\geq  e(G)-\sum_{v\in S'}d_G(v)
 \geq \Big(\frac{p-1}{2p}(1-2\eta)+5\eta^2-\eta^3\Big)n^2>\frac{p-1}{2p}(n-\lfloor\eta n\rfloor)^2\geq e(T_{n',p}).
\end{align*}
Since $B_{p,q}$ is an edge-color-critical graph with $\chi(B_{p,q})=p+1$, by Lemma \ref{lem2.1}, $G-S'$ contains a copy of $B_{p,q}$, a contradiction.
\end{proof}

\begin{lem}\label{lem2.5}
$|W|\leq \frac{1}{2}\eta n$.
\end{lem}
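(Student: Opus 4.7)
The plan is to prove Lemma 2.5 by a simple double-counting argument, using only the upper bound on $\sum_{i=1}^{p} e(V_i)$ provided by Lemma \ref{lem2.3}. The key observation is that each vertex $v\in W$ contributes at least $2\eta n$ to the sum $\sum_{i=1}^{p}\sum_{v\in V_i} d_{V_i}(v)$, which is in turn bounded above by twice the total number of internal edges.

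More precisely, I would first note that, by definition of $W_i$, every $v\in W_i$ satisfies $d_{V_i}(v)\ge 2\eta n$. Summing over all $i\in[p]$ and all $v\in W_i$, I would obtain
\begin{equation*}
  2\eta n\cdot |W| \;\le\; \sum_{i=1}^{p}\sum_{v\in W_i} d_{V_i}(v) \;\le\; \sum_{i=1}^{p}\sum_{v\in V_i} d_{V_i}(v) \;=\; 2\sum_{i=1}^{p} e(V_i).
\end{equation*}
By Lemma \ref{lem2.3}, the right-hand side is at most $2\eta^3 n^2$, so this immediately yields $|W|\le \eta^2 n$. Since $\eta<\frac{1}{7p^5q}<\frac{1}{2}$ by \eqref{align-1}, I would then conclude $|W|\le \eta^2 n<\frac{1}{2}\eta n$, which is even stronger than needed.

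Since the calculation is entirely elementary, I do not expect any serious obstacle. The only subtle point is choosing the right partition: I would use exactly the partition fixed before Lemma \ref{lem2.4}, namely the one minimizing $\sum_{i=1}^{p}e(V_i)$, so that the upper bound $\sum_i e(V_i)\le \eta^3 n^2$ from Lemma \ref{lem2.3} is available. The minimality of the partition is not actually needed here; only the edge-count bound is used. (The minimality will be exploited later, presumably to show that every vertex has more cross-neighbors than internal neighbors in the partition, which controls $d_G(v)$ for $v\notin W$.)
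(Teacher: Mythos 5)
Your argument is correct and is essentially identical to the paper's proof: both double-count internal degrees of vertices in $W_i$ against $2e(V_i)$, apply the bound $\sum_{i=1}^{p}e(V_i)\leq \eta^3 n^2$ from Lemma \ref{lem2.3}, and conclude $|W|\leq \eta^2 n\leq \frac{1}{2}\eta n$ using $\eta<\frac12$. No differences worth noting.
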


\begin{proof}
For every $i\in [p]$,
\begin{center}
  $2e(V_i)=\sum\limits_{v\in V_i}d_{V_i}(v)\geq
\sum\limits_{v\in W_i}d_{V_i}(v)\geq 2\eta n|W_i|.$
\end{center}
Combining this with Lemma \ref{lem2.3}, we have
\begin{center}
  $\eta^3 n^2\geq \sum\limits_{i=1}^{p}e(V_i)\geq \sum\limits_{i=1}^{p}\eta n|W_i|=\eta n|W|.$
\end{center}
This yields that $|W|\leq \eta^2 n\leq \frac{1}{2}\eta n$.
\end{proof}

Set $\overline{V}_i=V_i\setminus (S\cup W)$ for every $i\in [p]$.
Then we have the following statements.

\begin{lem}\label{lem2.6}
Let $i_0\in [p]$ and $j_0\in [pq]$ be two integers. Then \\
(a) $d_{V_{i_0}}(u)\geq\big(\frac{1}{p^2}-2p\eta\big)n$ for any $u\in \bigcup_{k\in [p]\setminus \{i_0\}}(W_{k}\setminus S)$ (if the vertex $u$ exists); \\
(b) $d_{V_{i_0}}(u)\geq\big(\frac{1}{p}-4p\eta\big)n$ for any $u\in \bigcup_{k\in [p]\setminus \{i_0\}}\overline{V}_{k}$;\\
(c) if $u\in \bigcup_{k\in [p]\setminus \{i_0\}}(W_{k}\setminus S)$ and
$\{u_1,\dots,u_{j_0}\}\subseteq \bigcup_{k\in [p]\setminus \{i_0\}}\overline{V}_k$, then there are at least $q$ vertices in $\overline{V}_{i_0}$ adjacent to $u_1,\dots,u_{j_0}$ and $u$.

\end{lem}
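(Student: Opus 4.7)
My plan is to deduce (a) and (b) from the identity $d_G(u)=\sum_i d_{V_i}(u)$ combined with the size cap $|V_j|\le(\tfrac{1}{p}+\eta)n$ of Lemma~\ref{lem2.3}, and then derive (c) by applying Lemma~\ref{counting} to the neighborhood lower bounds coming from (a) and (b). The only extra input needed is the minimality of the chosen partition $\bigcup_{i=1}^p V_i$, which enters only in (a).

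For (b), a vertex $u\in\overline V_k$ satisfies $d_{V_k}(u)<2\eta n$ (since $u\notin W$) and $d_G(u)\ge(\tfrac{p-1}{p}-5\eta)n$ (since $u\notin S$). Writing
\[
d_G(u)=d_{V_{i_0}}(u)+d_{V_k}(u)+\sum_{j\neq k,i_0}d_{V_j}(u)
\]
and bounding each of the $p-2$ remaining terms by $(\tfrac{1}{p}+\eta)n$, the claimed bound $d_{V_{i_0}}(u)\ge(\tfrac{1}{p}-(p+5)\eta)n\ge(\tfrac{1}{p}-4p\eta)n$ drops out after a short rearrangement (the last step uses $p\ge 3$). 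No minimality is required.

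For (a), a vertex $u\in W_k\setminus S$ may have $d_{V_k}(u)$ as large as $|V_k|$, so the direct bound fails. Here I would invoke the minimality of $\sum_{i=1}^p e(V_i)$: moving $u$ from $V_k$ to $V_{i_0}$ changes this sum by $d_{V_{i_0}}(u)-d_{V_k}(u)$, which must be non-negative, whence $d_{V_k}(u)\le d_{V_{i_0}}(u)$. Replacing $d_{V_k}(u)$ by $d_{V_{i_0}}(u)$ in the identity above gives $d_G(u)\le 2d_{V_{i_0}}(u)+(p-2)(\tfrac{1}{p}+\eta)n$, and therefore $d_{V_{i_0}}(u)\ge\tfrac{1}{2}\bigl(\tfrac{1}{p}-(p+3)\eta\bigr)n$. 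Since $\tfrac{1}{2p}\ge\tfrac{1}{p^2}$ and $\tfrac{p+3}{2}\le 2p$ for $p\ge 3$, this exceeds $(\tfrac{1}{p^2}-2p\eta)n$, as required.

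For (c), I would apply Lemma~\ref{counting} to the $j_0+1$ subsets $N_{V_{i_0}}(u),N_{V_{i_0}}(u_1),\ldots,N_{V_{i_0}}(u_{j_0})$ of $V_{i_0}$. Plugging in the bounds from (a), (b), the size cap $|V_{i_0}|\le(\tfrac{1}{p}+\eta)n$, and $j_0\le pq$, the common neighborhood inside $V_{i_0}$ has size at least $(\tfrac{1}{p^2}-C\eta)n$ with $C=O(p^2 q)$; discarding the at most $\tfrac{3}{2}\eta n$ vertices of $S\cup W$ via Lemmas~\ref{lem2.4} and~\ref{lem2.5} still leaves a common neighborhood inside $\overline V_{i_0}$ of size linear in $n$, which exceeds $q$ once $n$ is large enough. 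The main obstacle is the bookkeeping in this last step: the constraint $\eta<\tfrac{1}{7p^5 q}$ in \eqref{align-1} is tailored precisely so that the error coefficient accumulated in (c), which combines the weaker per-vertex bound $\tfrac{1}{p^2}$ from (a) with $j_0\le pq$ summands each carrying an $O(p\eta)n$ defect from (b), is comfortably dominated by $\tfrac{1}{p^2}$. Once this inequality on $\eta$ is verified, the rest of the argument is routine.
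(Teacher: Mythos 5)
Your proposal is correct and follows essentially the same route as the paper: partition minimality together with the size bounds from Lemma~\ref{lem2.3} for parts (a) and (b), and Lemma~\ref{counting} plus discarding the at most $\frac{3}{2}\eta n$ vertices of $S\cup W$ for part (c), with the constraint $\eta<\frac{1}{7p^5q}$ absorbing the accumulated error exactly as in the paper. The only cosmetic difference is in (a), where you exploit minimality via $d_{V_k}(u)\le d_{V_{i_0}}(u)$ instead of the paper's $d_{V_k}(u)\le \frac{1}{p}d_G(u)$, which in fact gives a slightly stronger bound of roughly $\frac{n}{2p}$ that still implies the stated $\big(\frac{1}{p^2}-2p\eta\big)n$.
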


\begin{proof}
(a) Suppose that there is a vertex $u\in W_{k_0}\setminus S$ for some ${k_0}\in [p]\setminus \{i_0\}$.
Since $\bigcup_{i=1}^{p}V_{i}$ is a partition of $V(G)$ such that $\sum_{i=1}^{p}e(V_i)$ attains the minimum (that is, $\sum_{1\leq i<j\leq p}e(V_{i},V_{j})$ is the maximum),
$d_{V_{{k_0}}}(u)\leq \frac{1}{p}d_{G}(u)$.
By Lemma \ref{lem2.3}, $|V_{k}|\leq \big(\frac{1}{p}+\eta\big)n$ for any $k\in [p]$.
Together with $d_G(u)> \big(\frac{p-1}{p}-5\eta\big)n$ (as $u\notin S$), we have
\begin{align*}
  d_{V_{i_0}}(u)&= d_G(u)-d_{V_{k_0}}(u)-\sum\limits_{k\in [p]\setminus\{i_0,k_0\}}d_{V_{k}}(u)\\
  &\geq \frac{p-1}{p}d_G(u)- (p-2)\Big(\frac{1}{p}+\eta\Big)n\\
  &\geq \Big(\frac{1}{p^2}-2p\eta\Big)n.
\end{align*}

(b)
Assume that $u\in \overline{V}_{k_0}$ for some $k_0\in [p]\setminus \{i_0\}$.
 Since $u\notin S\cup W_{k_0}$, we have $d_{V_{k_0}}(u)<2\eta n$ and $d_G(u)> \big(\frac{p-1}{p}-5\eta\big)n$.
Note that $d_{V_k}(u)\leq |V_{k}|\leq \big(\frac{1}{p}+\eta\big)n$ for any $k\in [p]\setminus\{i_0,k_0\}$. Then
\begin{align*}
  d_{V_{i_0}}(u)
    &= d_G(u)-d_{V_{k_0}}(u)-\sum\limits_{k\in [p]\setminus\{i_0,k_0\}}d_{V_{k}}(u)\\
    &\geq \Big(\frac{p-1}{p}-5\eta\Big)n-2\eta n- (p-2)\Big(\frac{1}{p}+\eta\Big)n\\
    &\geq \Big(\frac{1}{p}-4p\eta\Big)n.
\end{align*}

(c) By (a) and  (b), we have $d_{V_{i_0}}(u)\geq \big(\frac{1}{p^2}-2p\eta\big)n$ and $d_{V_{i_0}}(u_k)\geq\big(\frac{1}{p}-4p\eta\big)n$ for any $k\in [j_0]$.
Then by Lemma \ref{counting}, we have
\begin{align*}
\Big|N_{V_{i_0}}(u)\cap\Big(\bigcap_{k=1}^{j_0}N_{V_{i_0}}(u_k)\Big)\Big|
           &\geq  |N_{V_{i_0}}(u)|+\sum_{k=1}^{j_0}|N_{V_{i_0}}(u_k)|-j_0|V_{i_0}|\\
            &\geq  \Big(\frac{1}{p^2}-2p\eta\Big)n+j_0\Big(\frac{1}{p}-4p\eta\Big)n
                       -j_0\Big(\frac{1}{p}+\eta\Big)n\\
            &\geq  \Big(\frac{1}{p^2}-7pj_0\eta\Big)n\\
           &\geq  |S\cup W|+q,
\end{align*}
where the last inequality holds as $j_0\leq pq$, $\eta<\frac{1}{7p^5q}$, $|S\cup W|\leq |S|+|W|\leq \frac{3\eta n}{2}$, and $n$ is sufficiently large.
Then, there exist at least $q$ vertices in $\overline{V}_{i_0}$ adjacent to $u_1,\dots,u_{j_0}$ and $u$.
\end{proof}

\begin{lem}\label{lem2.7}
For each $i\in [p]$, $G[\overline{V}_i]$ is empty and $W_i\subseteq S$.
\end{lem}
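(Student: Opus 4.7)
In both parts I would argue by contradiction, producing a copy of $B_{p,q}$ in $G$ by a short greedy construction that uses the cross-partition degree bounds of Lemma~\ref{lem2.6} together with Lemma~\ref{counting}. To show first that $G[\overline{V}_i]$ is edgeless, suppose $uv \in E(G[\overline{V}_i])$. Fix any $j_* \in [p] \setminus \{i\}$ and, for each $j \in [p] \setminus \{i, j_*\}$ in turn, pick $w_j \in \overline{V}_j$ adjacent to $u$, $v$, and each previously chosen $w_{j'}$: by Lemma~\ref{lem2.6}(b) each of $u, v, w_{j'}$ has at least $\bigl(\tfrac{1}{p} - 4p\eta\bigr)n$ neighbors in $V_j$, so Lemma~\ref{counting} lower-bounds their common neighborhood in $V_j$ by about $\tfrac{n}{p} - O(p^2\eta n)$, which is much larger than $|S \cup W|$; hence a suitable $w_j \in \overline{V}_j$ exists. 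Then $K := \{u, v\} \cup \{w_j : j \in [p] \setminus \{i, j_*\}\}$ is a $K_p$, and a final application of Lemma~\ref{counting} inside $V_{j_*}$ produces at least $\tfrac{n}{p} - O(p^2\eta n) > q$ common neighbors of $K$, completing a $B_{p,q}$ subgraph, a contradiction.

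Next, to show $W_i \subseteq S$, suppose some $u \in W_i \setminus S$ exists. Since $d_{V_i}(u) \geq 2\eta n$ while $|S \cup W| \leq \tfrac{3\eta n}{2}$ by Lemmas~\ref{lem2.4}--\ref{lem2.5}, we have $|N_{\overline{V}_i}(u)| \geq \tfrac{\eta n}{2} > q$, so distinct vertices $v_1, \dots, v_q \in N_{\overline{V}_i}(u)$ may be chosen. Then, processing $j \in [p] \setminus \{i\}$ one by one, I would apply Lemma~\ref{lem2.6}(c) with $i_0 = j$, distinguished vertex $u \in W_i \setminus S$ (with $i \neq j$), and list $\{v_1, \dots, v_q\} \cup \{w_{j'} : j' \text{ already chosen}\} \subseteq \bigcup_{k \neq j}\overline{V}_k$ of length at most $q + (p - 2) \leq pq$, to obtain $w_j \in \overline{V}_j$ adjacent to $u$, every $v_k$, and every previous $w_{j'}$. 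Then $\{u\} \cup \{w_j : j \neq i\}$ is a $K_p$ whose common neighbors include $v_1, \dots, v_q$, again furnishing $B_{p,q}$ and a contradiction.

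The main obstacle is purely book-keeping. For the first part one must verify that every additive error term of order $\eta n$ (with polynomial-in-$p$ coefficients) picked up in the two applications of Lemma~\ref{counting} is absorbed by the main term $\tfrac{n}{p}$; this is guaranteed by the choice $\eta < \tfrac{1}{7p^5q}$ and $n$ sufficiently large. For the second part one must check at each greedy step that the admissibility condition $j_0 \leq pq$ for Lemma~\ref{lem2.6}(c) holds, which reduces to the inequality $q + p - 2 \leq pq$ and is valid precisely because $p \geq 3$ and $q \geq 1$.
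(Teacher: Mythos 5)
Your proof is correct and follows essentially the same route as the paper: a proof by contradiction that greedily embeds a copy of $B_{p,q}$ using the degree bounds of Lemma~\ref{lem2.6} together with Lemma~\ref{counting}. The only difference is cosmetic: for $W_i\subseteq S$ you build the book directly around the vertex $u\in W_i\setminus S$ via Lemma~\ref{lem2.6}(c) (checking $q+p-2\le pq$), whereas the paper reduces that case to the emptiness of $G[\overline{V}_i]$; both arguments work.
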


\begin{proof}
We first show that $G[\overline{V}_1]$ is empty. Otherwise, say, $G[\overline{V}_1]$ contains an edge $u_{1,1}u_{1,2}$. Then we set $\widehat{V}_1=\{u_{1,1},u_{1,2}\}$.
Recursively applying Lemma \ref{lem2.6} (c), we can obtain a sequence: $\widehat{V}_2,\dots,\widehat{V}_p$
such that for any $k\in [p]\setminus\{1\}$,
$\widehat{V}_k=\{u_{k,1},u_{k,2},\dots,u_{k,q}\}\subseteq \overline{V}_k$ and all vertices in $\widehat{V}_k$ are adjacent to all vertices in $\bigcup_{i=1}^{k-1}\widehat{V}_i$. Then $G[\{u_{1,1},u_{1,2},u_{2,1},\dots,u_{p-1,1}\}\cup \widehat{V}_p]$ contains a subgraph isomorphic to $B_{p,q}$, a contradiction. Hence, $G[\overline{V}_1]$ is empty. By symmetry, $G[\overline{V}_i]$ is empty for each $i\in [p]$.

Now we will show that $W_1\subseteq S$. Suppose to the contrary that there exists a vertex $u_{1,1}\in W_1\setminus S$. Note that $d_{V_1}(u_{1,1})\geq 2\eta n$ and $|S\cup W|\leq \frac{3}{2}\eta n$. By Lemmas \ref{lem2.4} and \ref{lem2.5},
we have $d_{\overline{V}_1}(u_{1,1})\geq d_{V_1}(u_{1,1})-|S\cup W|\geq \frac12\eta n$.
Thus, $G[\overline{V}_1]$ is not empty, which contradicts the statement above. Similarly, we have $W_i\subseteq S$ for each $i\in [p]$. This completes the proof.
\end{proof}

Let $u^*\in V_{i_0} $ for some $i_0\in [p]$ be a vertex with the maximum degree of $G$.
By \eqref{align-1.1}, we have $d_G(u^*)\geq \frac{2e(G)}{n}>(\frac{p-1}{p}-\eta)n.$
This implies that $u^*\notin S$. Hence $u^*\notin W$ by Lemma \ref{lem2.7}.
It follows that $d_{\overline{V}_{i_0}}(u^*)\leq d_{V_{i_0}}(u^*)< 2\eta n.$
Combining this with Lemma \ref{lem2.4}, we obtain
\begin{align*}
d_G(u^*)\leq |S|+|N_{V_{i_0}}(u^*)|+\sum_{i\in [p]\setminus\{i_0\}}|\overline{V}_{i}|
< 3\eta n+\sum_{i\in [p]\setminus\{i_0\}}|\overline{V}_{i}|.
\end{align*}
It implies that
\begin{align}\label{align-2}
\sum_{i\in [p]\setminus\{i_0\}}|\overline{V}_{i}|> d_G(u^*)-3\eta n> \Big(\frac{p-1}{p}-4\eta\Big)n.
\end{align}

\begin{lem}\label{lem2.9}
$S\neq \varnothing$ and $\chi(G-\{u\})=p$ for any $u\in S$.
\end{lem}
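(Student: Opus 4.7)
The plan is to handle the two assertions of the lemma in turn.

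For $S\neq\varnothing$, I would argue by contradiction: if $S=\varnothing$, then Lemma~\ref{lem2.7} forces $W\subseteq S=\varnothing$, so $\overline{V}_i=V_i$ for every $i\in[p]$, and a second appeal to Lemma~\ref{lem2.7} tells us that each $G[V_i]=G[\overline{V}_i]$ is empty. But then $V_1,\dots,V_p$ is a proper $p$-partition of $G$, contradicting the non-$p$-partiteness of $G$.

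Fix now $u\in S$ and let $i_0\in [p]$ denote the index with $u\in V_{i_0}$. The lower bound $\chi(G-u)\geq p$ follows from the edge count: using $d_G(u)\leq (\frac{p-1}{p}-5\eta)n$ together with \eqref{align-1.1}, a short calculation yields $e(G-u)>e(T_{n-1,p-1})$ for sufficiently large $n$, and Tur\'an's theorem then rules out $\chi(G-u)\leq p-1$.

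The upper bound $\chi(G-u)\leq p$ is the substantive half, and I would prove it by a vertex-replacement argument. Suppose for contradiction that $\chi(G-u)\geq p+1$. Introduce a new vertex $u'$ and form $G^*:=(G-u)+u'$ in which $u'$ is joined precisely to $\bigcup_{i\in[p]\setminus\{i_0\}}\overline{V}_i$. Since $W\subseteq S$, Lemmas~\ref{lem2.3} and \ref{lem2.4} yield
\begin{align*}
d_{G^*}(u')=\sum_{i\neq i_0}|\overline{V}_i|\geq (n-|V_{i_0}|)-|S|\geq \Big(\tfrac{p-1}{p}-2\eta\Big)n>\Big(\tfrac{p-1}{p}-5\eta\Big)n\geq d_G(u),
\end{align*}
so $e(G^*)>e(G)$; and $G^*$ inherits non-$p$-partiteness from $G-u$. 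Hence, once $G^*$ is shown to be $B_{p,q}$-free, it contradicts the edge-maximality of $G$ in $\mathcal{G}(n,p,q)$.

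The remaining task, and the principal obstacle, is to verify the $B_{p,q}$-freeness of $G^*$. Since $G$ contains no $B_{p,q}$, any copy in $G^*$ must use $u'$. If $u'$ is one of the $q$ pendants, the $K_p$ of the $B_{p,q}$ lies inside $N_{G^*}(u')=\bigcup_{i\neq i_0}\overline{V}_i$, which is a union of $p-1$ independent sets by Lemma~\ref{lem2.7}; pigeonhole then places two vertices of this $K_p$ into some $\overline{V}_k$, contradicting the independence of $\overline{V}_k$. Otherwise $u'$ belongs to the $K_p$, so the remaining $p-1+q$ vertices $X$ of the $B_{p,q}$ lie in $\bigcup_{i\neq i_0}\overline{V}_i$; by Lemma~\ref{lem2.6}(b), $d_{V_{i_0}}(x)\geq (\frac{1}{p}-4p\eta)n$ for each $x\in X$, and Lemma~\ref{counting} combined with $|V_{i_0}|\leq (\frac{1}{p}+\eta)n$ and the choice $\eta<\frac{1}{7p^5q}$ in \eqref{align-1} furnishes a common neighbor $v\in V_{i_0}$ of $X$. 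Then $\{v\}\cup X$ spans a $B_{p,q}$ in $G$, contradicting $G$'s $B_{p,q}$-freeness. Making this common-neighbor count survive the error terms uniformly in $p$ and $q$ is exactly what dictates the smallness of $\eta$ chosen in \eqref{align-1}.
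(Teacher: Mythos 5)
Your proposal is correct and follows essentially the same route as the paper: if $S=\varnothing$ then Lemma~\ref{lem2.7} makes $G$ $p$-partite, and for $u\in S$ with $\chi(G-u)\geq p+1$ one re-wires $u$ (your new vertex $u'$ is the same construction) to all of $\bigcup_{i\neq i_0}\overline{V}_i$, checks the edge gain via $|S|\le\eta n$ and the definition of $S$, preserves non-$p$-partiteness, and verifies $B_{p,q}$-freeness by replacing the modified vertex with a common neighbour in $V_{i_0}$ obtained from the degree bounds and Lemma~\ref{counting} (the paper does this uniformly via Lemma~\ref{lem2.6}(c), while your pendant/clique case split and Tur\'an-based lower bound $\chi(G-u)\ge p$ are only cosmetic variations).
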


\begin{proof}
Suppose first to the contrary that $S= \varnothing$.
By Lemma \ref{lem2.7}, we know that for each $i\in [p]$, $W_i=\varnothing$. Therefore, $G[V_i]$ is empty for each $i\in [p]$.
This means $\chi(G)\leq p$, a contradiction.

We now claim that $\chi(G-\{u\})=p$ for any $u\in S$.
Otherwise,
there is a vertex $u_0\in S\cap V_{i_0}$ for some $i_0\in [p]$ such that $\chi(G-\{u_0\})\geq p+1$  (since $\chi(G)\geq p+1$).
Let $G'$ be the graph obtained from $G$ by deleting all edges incident to $u_0$
and adding all possible edges between $(\bigcup_{i\in [p]\setminus\{i_0\}}\overline{V}_i)\setminus \{u_0\}$ and $u_0$.
We will show that $G'$ is also $B_{p,q}$-free. Otherwise, suppose that $G'$ contains a subgraph $H$ isomorphic to $B_{p,q}$.
From the construction of $G'$, $u_0\in V(H)$. Let $u_1,u_2,\dots,u_c\in \bigcup_{i\in [p]\setminus\{i_0\}}\overline{V}_i$ be $c$ neighbors of $u_0$ in $H$, where $c\leq |V(H)|-1\leq pq$.
By Lemma \ref{lem2.6} (c), we can select a new vertex $u\in \overline{V}_{i_0}\setminus V(H)$ adjacent to all $u_1,u_2,\dots,u_c$.
This implies that $G[(V(H)\setminus \{u_0\})\cup\{u\}]$ also contains a copy of $B_{p,q}$, a contradiction.

On the other hand, by \eqref{align-2} and the definition of $S$, we obtain
\begin{align*}\label{align-2}
e(G')-e(G)\geq \bigg|\Big(\bigcup_{i\in [p]\setminus\{i_0\}}\overline{V}_i\Big)\setminus \{u_0\}\bigg|-d_G(u_0)
>\Big(\frac{p-1}{p}-4\eta\Big)n-\Big(\frac{p-1}{p}-5\eta\Big)n>0.
\end{align*}
Since $G-\{u_0\}$ is a subgraph of $G'$, $\chi(G')\geq \chi(G-\{u_0\})\geq p+1$. But it contradicts the choice of $G$.
The proof is complete.
\end{proof}

Let $u_0$ be a vertex of $G$ with $d_G(u_0)=\min\{d_G(u): u\in V(G)\}$.
Since $S\neq \varnothing$, $u_0\in S$.
By Lemma \ref{lem2.9}, we know that $\chi(G-\{u_0\})=p$.
Let $U_1,U_2,\dots,U_p$ be $p$ color classes of $G-\{u_0\}$.
For each $i\in [p]$, set $U_{i}'=N_G(u_0)\cap U_i$.
It is clear that each $U_{i}'$ is non-empty
(Otherwise, $U_1,\dots,U_{i-1},U_i\cup \{u_0\},U_{i+1},\dots,U_p$ are $p$ color classes of $G$. It follows that
 $\chi(G)\leq p$, a contradiction).
Note that $U_1\cup \{u_0\},U_2,\dots,U_p$ consist of a vertex partition of $G$ with
$e(U_1\cup \{u_0\})+\sum_{i=2}^{p}e(U_i)<N_G(u_0)\leq \big(\frac{p-1}{p}-5\eta\big)n<\eta^3 n^2$ for sufficiently large $n$. By Lemma \ref{lem2.3}, $\big||U_i|-\frac{n}{p}\big|\leq\eta n$ for each $i\in [p]$.

\begin{proc}\label{proc1}
Set $G_1:=G$.
We begin with $i=1$ and proceed the following steps to generate a sequence of graphs $G_1,G_2,\dots, G_\ell$.

 \noindent
 ($i$) In the $i$-th step, we let $U_{s,i}=N_{G_i}(u_0)\cap U_s$ and define the following three types of $U_{s,i}$ for each $s\in [p]$:\\
$\bullet$ Type ($i,A$): if $N_{G_i}(u)\setminus \{u_0\}=\cup_{s'\in [p]\setminus \{s\}}U_s$ for each vertex $u\in U_{s,i}$;\\
$\bullet$ Type ($i,B$): if $U_{s,i}$ contains exactly one element, say $u$, and
 $N_{G_i}(u)\setminus \{u_0\}\subsetneq \cup_{s'\in [p]\setminus \{s\}}U_s$;\\
$\bullet$ Type ($i,C$): otherwise.

\noindent
($ii$) If there exists an integer $s\in [p]$ such that $U_{s,i}$ is of type ($i,C$),
then there is a vertex $u_{i}$ in $U_{s,i}$ such that $N_{G_i}(u_{i})\setminus \{u_0\}\subsetneq \cup_{s'\in [p]\setminus \{s\}}U_s$.
Let $G_{i+1}$ be the graph obtained from $G_i$ by deleting the edge $u_0u_{i}$
and adding all possible edges between $\big(\cup_{s'\in [p]\setminus \{s\}}U_{s'}\big)\setminus N_{G_i}(u_i)$ and $u_{i}$.
Set $U_{s,i+1}=U_{s,i}\setminus\{u_{i}\}$ and  $U_{s',i+1}=U_{s',i}$ for each $s'\in [p]\setminus \{s\}$.
Go to the $(i+1)$-step.

\noindent
($iii$) If for each $s\in [p]$, $U_{s,i}$ is not of Type ($i,C$),
then we stop the procedure and set $G^*=G_i$.
\end{proc}

Clearly, the procedure will end in finite steps.
We denote by $G_{\ell}$ the resulting graph.
For any $\{i,j\}\subset [p]$,
let $G_{i,j}$ be the graph obtained from the complete $p$-partite graph with $p$ color classes $U_1,\dots,U_p$ of $G-\{u_0\}$  (denote as above) by deleting an edge $u_iu_j$ (where $u_i\in U_i$ and  $u_j\in U_j$), then adding all edges between $u_0$ and $(\cup_{s\in [p]\setminus \{i,j\}}U_s)\cup \{u_i,u_j\}$.

\begin{lem}\label{lemma3.7C}
$G_{\ell}\cong  G_{i_0,j_0}$ for some $\{i_0,j_0\}\subseteq [p]$.
Moreover, $e(G)=e(G_{i_0,j_0})$.
\end{lem}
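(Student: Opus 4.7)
My plan is to run Procedure~\ref{proc1} on $G$, show that the intermediate graphs $G_i$ remain in $\mathcal{G}(n,p,q)$ with the same number of edges as $G$, and then analyze the terminal graph $G_\ell$.

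First, I would verify three invariants for each $G_i$: $(i)$ $G_i$ is non-$p$-partite; $(ii)$ $G_i$ is $B_{p,q}$-free; $(iii)$ $e(G_{i+1})=e(G_i)$. For $(i)$, Type~$(i,C)$ requires $|U_{s,i}|\ge 2$, so after a step $|U_{s,i+1}|\ge 1$ and $u_0$ keeps a neighbor in every color class; as only cross-class edges are added, $U_1,\dots,U_p$ remains a valid coloring of $G_{i+1}-\{u_0\}$. For $(ii)$, if $G_{i+1}$ contained a copy $H\cong B_{p,q}$, it would have to use a new edge at $u_i$ and hence contain $u_i$; the at most $p-1+q\le pq$ neighbors of $u_i$ inside $H$ all lie in $\bigcup_{s'\ne s}U_{s'}$, and Lemma~\ref{lem2.6}(c) (applied via the closeness of the $U$-partition to the optimal $V$-partition supplied by Lemma~\ref{lem2.3}) produces a vertex $u^*\in\overline{V}_s\setminus V(H)$ adjacent to all of them in $G_i$, so $G_i$ would already contain $B_{p,q}$, a contradiction. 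For $(iii)$, the step deletes exactly one edge $u_0u_i$ and adds at least one edge (since $u_i$ is not fully connected), so $e(G_{i+1})\ge e(G_i)$; combined with $(i)$, $(ii)$ and the edge-maximality of $G$ in $\mathcal{G}(n,p,q)$, equality must hold, and $u_i$ had exactly one missing edge to $\bigcup_{s'\ne s}U_{s'}$.

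Second, I would analyze $G_\ell$. Let $\mathcal{B}\subseteq[p]$ index the Type~$(\ell,B)$ classes. Each $|U_{s,\ell}|\ge 1$, since otherwise $G_\ell$ would be $p$-partite. To show $|\mathcal{B}|\ge 2$: if $|\mathcal{B}|\le 1$, then either some Type~$(A)$ class satisfies $|U_{s_0,\ell}|\ge q$, in which case a $K_p$ formed by $u_0$ together with one vertex from each of the other $p-1$ classes admits $q$ common neighbors inside $U_{s_0,\ell}$ and hence $B_{p,q}\subseteq G_\ell$, or else $d_{G_\ell}(u_0)\le pq$, giving $e(G_\ell)\le e(T_{n-1,p})+pq$, which is strictly less than $e(Y_r(n))=e(T_{n-1,p})+1+\sum_{i=3}^p|T_i|$ for $p\ge 3$ and $n$ large; the subcase $|\mathcal{B}|=1$ additionally uses the Type~$(B)$ singleton either as a common neighbor of the $K_p$ or inside it. To show $|\mathcal{B}|\le 2$ and the precise structure, I would compare $e(G_\ell)$ to the target $e(G_{i_0,j_0})=e(Y_r(n))$: the saturation $e(G_\ell)=e(G)$ is reachable only when exactly two classes each contribute a single defective singleton, these two singletons are mutually non-adjacent and adjacent to everything else across color classes, and every remaining vertex of $\bigcup_sU_s$ is both adjacent to $u_0$ and completely joined to the other color classes. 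Relabeling then gives $G_\ell\cong G_{i_0,j_0}$ together with $e(G)=e(G_{i_0,j_0})$.

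The main obstacle is this last structural step, particularly pinning down the cross-class adjacencies of the \emph{unprocessed non-neighbors} of $u_0$ in Type~$(A)$ classes---vertices that never appeared in any $U_{s,i}$ and so were never touched by the procedure. Their behaviour must be controlled indirectly, by combining the edge-count saturation with the structural information in Lemmas~\ref{lem2.3}--\ref{lem2.7} on $\overline{V}_i$, $S$, and $W$, so that every missing edge in $G_\ell-\{u_0\}$ collapses onto the single edge $u_{i_0}u_{j_0}$ and every non-neighbor of $u_0$ lies in $(U_{i_0}\setminus\{u_{i_0}\})\cup(U_{j_0}\setminus\{u_{j_0}\})$.
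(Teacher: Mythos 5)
Your outline matches the paper's in spirit (run Procedure \ref{proc1}, keep the edge count from dropping, then classify the terminal classes by type), but three load-bearing steps are not actually established, and the last one, which you flag yourself, is exactly where the proof lives. First, your invariant $(i)$: knowing that $u_0$ keeps a neighbour in every class of the particular colouring $U_1,\dots,U_p$ does not bound $\chi(G_{i+1})$ from below --- some other $p$-colouring of $G_{i+1}$ could absorb $u_0$ --- so the per-step equality $e(G_{i+1})=e(G_i)$, which you derive from the maximality of $G$ in $\mathcal{G}(n,p,q)$, is unsupported. The paper never claims the intermediate graphs are non-$p$-partite; it only uses the monotonicity $e(G_\ell)\ge \cdots\ge e(G)$. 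Second, your invariant $(ii)$ cannot be pushed through Lemma \ref{lem2.6}(c): that lemma is about the partition $V_1,\dots,V_p$ and requires the prescribed vertices to lie in the sets $\overline{V}_k$ (outside $S\cup W$), whereas the neighbours of $u_i$ in a putative copy of $B_{p,q}$ in $G_{i+1}$ include \emph{all} of $\bigl(\cup_{s'\ne s}U_{s'}\bigr)\setminus N_{G_i}(u_i)$, hence possibly vertices of $S$ (or even $u_0$ itself, if $u_i$ plays the role of an independent vertex of $H$), for which no common-neighbour count is available. The paper's argument is different and needs no substitution: since $G_\ell-\{u_0\}$ is $p$-partite, any copy $H$ of $B_{p,q}$ in $G_\ell$ must contain $u_0$ as a dominating vertex, so $H\subseteq G_\ell[N_{G_\ell}(u_0)\cup\{u_0\}]\subseteq G[N_{G}(u_0)\cup\{u_0\}]$ (every edge added by the procedure has an endpoint that is no longer a neighbour of $u_0$), contradicting that $G$ is $B_{p,q}$-free.

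Third, and decisively, the structural conclusion is missing. You may not assert $e(G_{i_0,j_0})=e(Y_r(n))$ inside the lemma (the $U_i$ need not be balanced), and the ``saturation forces the structure'' step --- controlling the cross-class adjacencies of the untouched non-neighbours of $u_0$ --- is exactly what you concede you cannot do. The paper closes this with a short sandwich: (a) each $G_{i,j}$ is $K_{p+1}$-free with $\chi(G_{i,j})=p+1$, hence $B_{p,q}$-free and non-$p$-partite, so extremality of $G$ gives $e(G)\ge e(G_{i,j})$; (b) this inequality together with $e(G_\ell)\ge e(G)$ forces some $U_{t,\ell}$ to contain at least $\max\{q,2\}$ vertices, so if every pair $U_{i,\ell},U_{j,\ell}$ were completely joined one would find $B_{p,q}$ inside $G[N_G(u_0)\cup\{u_0\}]$; (c) hence some pair is not completely joined, and since the procedure has terminated both of these classes are Type $(\ell,B)$ singletons $\{u_{i_0}\},\{u_{j_0}\}$ with $u_{i_0}u_{j_0}\notin E(G_\ell)$, which gives the containment $G_\ell\subseteq G_{i_0,j_0}$; finally $e(G_{i_0,j_0})\ge e(G_\ell)\ge e(G)\ge e(G_{i_0,j_0})$ forces equality throughout, so $G_\ell\cong G_{i_0,j_0}$ and $e(G)=e(G_{i_0,j_0})$. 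In particular, every missing cross edge is recovered automatically from the inequality in (a); no separate analysis of the unprocessed vertices is needed. Without proving $e(G)\ge e(G_{i,j})$ and the containment $G_\ell\subseteq G_{i_0,j_0}$, your argument does not reach the conclusion.
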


\begin{proof}
We partition the proof into three claims as follows.

\begin{claim}\label{claim3.1FF}
$e(G)\geq e(G_{i,j})$ for any $\{i,j\}\subset [p]$.
\end{claim}

\begin{proof}
Note that $\big||U_i|-\frac{n}{p}\big|\leq\eta n$ for each $i\in [p]$. Then
 $Y_r(\frac{n}{2})\subseteq G_{i,j}\subseteq Y_r(2n)$.
Note that $Y_r(2n)$ is $K_{p+1}$-free and $\chi\left(Y_r\big(\frac{n}{2}\big)\right)=\chi(Y_r(2n))=p+1$.
It follows that $\chi(G_{i,j})=p+1$ and $G_{i,j}$ is $B_{p,q}$-free.
By the choice of $G$, $e(G)\geq e(G_{i,j})$ for any $\{i,j\}\subset [p]$.
\end{proof}

\begin{claim}\label{claim3.1B}
$G_{\ell}$ is $B_{p,q}$-free.
\end{claim}

\begin{proof}
Clearly, $\chi(G_{\ell}-\{u_0\})=p$. Suppose to the contrary
that $G_{\ell}$ contains a subgraph $H$ isomorphic to $B_{p,q}$.
Since $\chi(H)=p+1$ and $H-\{u_0\}\subseteq G_{\ell}-\{u_0\}$ , we have $\chi(H-\{u_0\})=p$ and $u_0\in V(H)$.
Then, $u_0$ is adjacent to all vertices of $H$ as $q\geq 2$.
This means that $H\subseteq G_{\ell}[N_{G_{\ell}}(u_0)\cup \{u_0\}]\subseteq G[N_{G}(u_0)\cup \{u_0\}]$.
However, this is impossible as $G$ is $B_{p,q}$-free.
\end{proof}

 Recall that $U_{s,\ell}=N_{G_\ell}(u_0)\cap U_i$ for each $s\in [p]$. Then the following statement holds.
\begin{claim}\label{claim3.1C}
There exists some $t\in [p]$ such that $U_{t,\ell}$ is of Type ($\ell,A$).
\end{claim}

\begin{proof}
Without loss of generality, assume that $|U_{1,\ell}|=\max_{s\in [p]}|U_{s,\ell}|$.
Clearly,
\begin{equation*}
e(G_\ell)\leq e(K_{|U_1|,\dots,|U_p|})+\sum_{s\in [p]}|U_{s,\ell}|,
\end{equation*}
and
\begin{equation*}
e(G_{i,j})=e(K_{|U_1|,\dots,|U_p|})-1+\sum_{s\in [p]\setminus \{i,j\}}|U_s|.
\end{equation*}
Combining  Claim \ref{claim3.1FF} and $\big||U_s|-\frac{n}{p}\big|\leq\eta n$ for each $s\in [p]$,
we obtain
$$p|U_{1,\ell}|\geq \sum_{s\in [p]}|U_{s,\ell}|
\geq \sum_{s\in [p]\setminus \{i,j\}}|U_s|-1\geq \frac{n}{2p},$$
which implies that $|U_{1,\ell}|\geq \frac{n}{2pq}\geq \max\{q,2\}\ge 2$.
So, $U_{1,\ell}$ is of Type ($\ell,A$).
\end{proof}

For two vertex subsets $U_1$ and $U_2$, let $G[U_1,U_2]$ be the subgraph of $G$ with its vertex set $U_1\cup U_2$ and edge set $\{uv|u\in U_1,v\in U_2\}$.
\begin{claim}\label{claim3.2B}
$G_{\ell}[U_{i_0,\ell},U_{j_0,\ell}]$ is not a complete bipartite subgraph of $G_{\ell}$ for some $\{i_0,j_0\}\subset [p]$.
\end{claim}

\begin{proof}
Otherwise, $G_{\ell}[\{u_0\}\cup (\cup_{s\in [p]}U_{s,\ell})]$ is a complete $(p+1)$-partite graph. By Claim \ref{claim3.1C}, we have $|U_{t,\ell}|\geq q$ for some $t\in [p]$.
Then, $G_{\ell}[\{u_0\}\cup (\cup_{s\in [p]}U_{s,\ell})]$ contains a copy of $B_{p,q}$.
On the other hand, since $G[\{u_0\}\cup (\cup_{s\in [p]}U_{s,\ell})]=G_{\ell}[\{u_0\}\cup (\cup_{s\in [p]}U_{s,\ell})]$, we can see that $G_{\ell}$ also contains a copy of $B_{p,q}$,
which contradicts Claim \ref{claim3.1B}.
\end{proof}
By the definition of $G_\ell$, we can see that
\begin{equation}\label{equ-1}
e(G_{\ell})\geq e(G_{\ell-1})\geq \cdots \geq e(G_{1})=e(G).
\end{equation}
By Claim \ref{claim3.2B}, $G_{\ell}[U_{i_0,\ell},U_{j_0,\ell}]$ is not a complete bipartite subgraph
 for some $\{i_0,j_0\}\subseteq [p]$.
Considering the definition of $G_\ell$,
we can see that both $U_{i_0,\ell}$ and $U_{j_0,\ell}$ are of Type ($\ell,B$).
Assume that $U_{i_0,\ell}=\{u_{i_0}\}$ and $U_{j_0,\ell}=\{u_{j_0}\}$.
Clearly, $u_{i_0}u_{j_0}\notin E(G_\ell)$.
Thus, $G_\ell$ is a subgraph of $G_{i_0,j_0}$.
This, together with \eqref{equ-1} and Claim \ref{claim3.1FF}, implies that $G_\ell\cong G_{i_0,j_0}$ and $e(G)=e(G_{i_0,j_0})$, as desired.
\end{proof}

Note that $G_{i_0,j_0}$ is an $n$-vertex $K_{p+1}$-free non-$p$-partite graph.
By the definition of $Y_r(n)$, we have $e(G_{i_0,j_0})\leq e(Y_r(n))$.
On the other hand, we can see that $Y_r(n)$ is $K_{p+1}$-free and  non-$p$-partite.
Since $K_{p+1}\subseteq B_{p,q}$, we know that $Y_r(n)$ is $B_{p,q}$-free.
By the choice of $G$ and Lemma \ref{lemma3.7C}, we have $e(G_{i_0,j_0})=e(G)\geq e(Y_r(n)).$
Therefore, $e(G)=e(G_{i_0,j_0})=e(Y_r(n))$.

For any $H\in \mathcal{G}(n,p,1)$,
since $H$ is $K_{p+1}$-free,  it follows that $H$ is also $B_{p,q}$-free.
Combining $e(G)=e(Y_r(n))=e(H)$ gives $H\in \mathcal{G}(n,p,q)$.
This leads to that $\mathcal{G}(n,p,1)\subseteq \mathcal{G}(n,p,q)$.
This completes the proof of Theorem \ref{thm1.1}.

\section{Proof of Theorem \ref{thm1.2}}\label{section5}
In this section, we first give some necessary lemmas as follows.

\begin{lem}\emph{(Wu, Xiao and Hong, \cite{Wu2005})}\label{Wu}
 Let $G$ be a connected graph and $\text{\bf x}$ be the Perron vector of $G$ with the coordinate $x_v$ of $\text{\bf x}$ corresponding to the vertex $v$ of $G$. Assume that there are two vertices $u,v$ of $G$ satisfying that $x_u \ge x_v$ and $v$ has $s\ge 1$ private neighbors of $v$ (that is, not adjacent to $u$), say $v_1,\ldots,v_s$. Let $G'=G-\{vv_i:i\in [s]\}+\{uv_i:i\in [s]\}$. Then $\rho(G')>\rho(G)$.
 \end{lem}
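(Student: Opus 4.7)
The plan is to establish $\rho(G') > \rho(G)$ by the Rayleigh quotient technique, using the Perron vector $\mathbf{x}$ of $G$ itself as a test vector for $A(G')$. Since $G$ is connected, Perron--Frobenius guarantees that $\mathbf{x}$ is strictly positive and $\rho(G)\|\mathbf{x}\|^2 = \mathbf{x}^{\top} A(G)\mathbf{x}$, which is the maximum of the Rayleigh quotient for $A(G)$ over nonzero vectors.

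First I would compute the scalar $\mathbf{x}^{\top} A(G')\mathbf{x} - \mathbf{x}^{\top} A(G)\mathbf{x}$. Because $G'$ is obtained from $G$ by deleting the $s$ edges $vv_i$ and inserting the $s$ edges $uv_i$, with the two edge sets disjoint thanks to the privacy hypothesis ($uv_i \notin E(G)$), this quadratic difference telescopes cleanly to
\[
2\sum_{i=1}^{s} x_{v_i}(x_u - x_v),
\]
which is non-negative under $x_u \geq x_v$ combined with the positivity of each $x_{v_i}$. Combined with the variational lower bound $\rho(G') \geq \mathbf{x}^{\top} A(G')\mathbf{x}/\|\mathbf{x}\|^2$, this immediately yields the weak inequality $\rho(G') \geq \rho(G)$.

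The more delicate step, and the main obstacle, is upgrading this to strict inequality. I would argue by contradiction: suppose $\rho(G') = \rho(G)$. Then equality must hold throughout the Rayleigh estimate, which forces $\mathbf{x}$ itself to be an eigenvector of $A(G')$ corresponding to its largest eigenvalue $\rho(G')$. Subtracting the identity $A(G)\mathbf{x} = \rho(G)\mathbf{x}$ from $A(G')\mathbf{x} = \rho(G)\mathbf{x}$ and reading off the coordinate at the vertex $v$ then yields $\sum_{i=1}^{s} x_{v_i} = 0$, because in the passage from $G$ to $G'$ the vertex $v$ loses exactly the edges $vv_1, \ldots, vv_s$ and gains none. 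This contradicts the strict positivity of $\mathbf{x}$, hence $\rho(G') > \rho(G)$. Reading the spectral equation at $u$ or at one of the $v_i$ would be less decisive, since those coordinates mix in freshly created adjacencies whose contributions do not have a definite sign; the cleanness of the contradiction at $v$ is the reason for selecting that particular coordinate.
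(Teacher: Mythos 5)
Your argument is correct: the Rayleigh-quotient comparison gives $\mathbf{x}^{\top}\bigl(A(G')-A(G)\bigr)\mathbf{x}=2(x_u-x_v)\sum_{i=1}^{s}x_{v_i}\ge 0$, hence $\rho(G')\ge\rho(G)$, and in the equality case $\mathbf{x}$ must be a top eigenvector of $A(G')$, so the eigenvalue equation at $v$ forces $\sum_{i=1}^{s}x_{v_i}=0$, contradicting positivity; this is exactly the standard proof of this edge-rotation lemma, which the paper does not reprove but simply quotes from Wu, Xiao and Hong. One minor remark: reading the eigenvalue equation at $u$ would have been equally decisive (it gives $\sum_{i=1}^{s}x_{v_i}=0$ with the opposite sign); only the coordinates at the vertices $v_i$ are inconclusive, since they merely yield $x_u=x_v$.
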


 In 2009, Nikiforov \cite{Nikiforov4} gave the following spectral stability theorem.

\begin{thm}\label{thm3.1}\emph{(Nikiforov, \cite{Nikiforov4})}
For $r\ge 2$, $0<\varepsilon<2^{-36}r^{-24}$ and $\frac{1}{\ln n}<c<r^{-8(r+21)(r+1)}$,
If $G$ is an $n$-vertex graph with $\rho(G)>(1-\frac1r-\varepsilon)n$, then either $G$ differs from $T_{n,r}$ in fewer than $(\varepsilon^{\frac{1}{4}}+c^{\frac{1}{8r+8}})n^2$ edges, or $G$ contains a $K_{r+1}(\lfloor c\ln n\rfloor, \dots,\lfloor c\ln n\rfloor,\lceil n^{1-\sqrt{c}}\rceil)$.
\end{thm}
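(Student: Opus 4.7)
The plan is to prove a dichotomy: either $G$ is structurally very close (in edit distance) to the Turán graph $T_{n,r}$, giving the first alternative, or else $G$ must contain a rather thick complete $(r+1)$-partite subgraph with the prescribed logarithmic/polynomial part sizes, giving the second alternative. The proof blends a spectral-to-edges conversion with the Erdős–Simonovits stability theorem (Lemma \ref{lem2.2}) and a supersaturation/book-type counting argument.

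First I would convert the spectral lower bound into an edge-count lower bound. The crude inequality $\rho(G)^2\le \sum_i\lambda_i^2=2e(G)$ is too weak, so instead I would peel off low-degree vertices: while some vertex $v$ has $d_G(v)<(1-\tfrac1r-\varepsilon^{1/4})n$, remove it; each such deletion drops $\rho$ by at most the deleted degree (via the Rayleigh quotient). Since the total slack between $(1-\tfrac1r-\varepsilon)n$ and $(1-\tfrac1r-\varepsilon^{1/4})n$ is of order $\varepsilon^{1/4}n$, the process terminates on a subgraph $G'$ of order $n'\ge(1-O(\varepsilon^{1/4}))n$ with $\rho(G')>(1-\tfrac1r-\varepsilon)n'$ and minimum degree at least $(1-\tfrac1r-\varepsilon^{1/4})n'$, hence $e(G')\ge \tfrac12(1-\tfrac1r-O(\varepsilon^{1/4}))n'^2$.

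Next I apply the Erdős–Simonovits stability theorem (Lemma \ref{lem2.2}) with $H=K_{r+1}$ to this dense $G'$. Either $G'$ is within $\tfrac12\varepsilon^{1/4}n^2$ edges of $T_{n',r}$, in which case reattaching the peeled vertices (which number $O(\varepsilon^{1/4}n)$) shows $G$ differs from $T_{n,r}$ in fewer than $(\varepsilon^{1/4}+c^{1/(8r+8)})n^2$ edges and the first conclusion holds; or else $G'$ is not $K_{r+1}$-free, and in fact, by the Erdős supersaturation principle applied to a graph whose edge count exceeds $e(T_{n',r})+\Omega(n^2)$, $G'$ contains at least $\Omega(n^{r+1})$ copies of $K_{r+1}$.

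Finally, from a positive density of $K_{r+1}$'s I would build the promised $K_{r+1}(t,\dots,t,s)$ with $t=\lfloor c\ln n\rfloor$ and $s=\lceil n^{1-\sqrt c}\rceil$ by an iterated Kővári–Sós–Turán / book argument. By averaging over the $\Omega(n^{r+1})$ copies, some $r$-clique is contained in $\Omega(n)$ of them, so has $\Omega(n)$ common neighbours; within that common neighbourhood I would recursively extract, one class at a time, $t$ vertices in each of the first $r$ classes while keeping at least $n^{1-\sqrt c}$ vertices in the last class adjacent to all previously chosen vertices. I expect this layered extraction to be the main obstacle: the precise parameters $t=\lfloor c\ln n\rfloor$, $s=\lceil n^{1-\sqrt c}\rceil$ and the threshold $c<r^{-8(r+21)(r+1)}$ are forced by balancing, at every one of the $r$ levels, the Kővári–Sós–Turán bound $\mathrm{ex}(n,K_{s,t})\le \tfrac12\!\bigl((t-1)^{1/s}(n-s+1)n^{1-1/s}+(s-1)n\bigr)$ so that the polynomial count $n^{1-\sqrt c}$ on the last class is preserved rather than collapsing to a logarithm. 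Packaging these exponents, together with the tiny threshold $\varepsilon<2^{-36}r^{-24}$ needed to push the stability step through, is the delicate heart of the argument and will almost certainly be carried out by induction on $r$.
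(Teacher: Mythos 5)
The paper itself offers no proof of this statement: it is quoted verbatim from Nikiforov \cite{Nikiforov4}, so your attempt has to stand on its own, and as written it has two genuine gaps. The first is the spectral-to-edge conversion. Beyond the fact that the per-deletion estimate is not what you claim (restricting the Perron vector gives a drop of at most roughly $2d(v)/\rho(G)$, and in any case there is no initial slack to spend, since the target $(1-\frac1r-\varepsilon)n'$ decreases by less than $1$ per deleted vertex), the conclusion you want from the peeling is simply false. Take $G$ to be a clique on $(1-\frac1r)n$ vertices together with $\frac{n}{r}$ isolated vertices: then $\rho(G)=(1-\frac1r)n-1>(1-\frac1r-\varepsilon)n$, your peeling removes all $\frac{n}{r}$ isolated vertices (not $O(\varepsilon^{1/4}n)$ vertices), and no induced subgraph on $(1-O(\varepsilon^{1/4}))n$ vertices has $(1-\frac1r-O(\varepsilon^{1/4}))\frac{n'^2}{2}$ edges. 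So the spectral hypothesis cannot be reduced to an almost-spanning subgraph of near-Tur\'an density; this reduction is precisely the hard content of Nikiforov's theorem (his argument works with the Perron vector and clique counts rather than degree peeling), and with it collapses your derivation of the first alternative (``reattach the $O(\varepsilon^{1/4}n)$ peeled vertices''), while the second alternative, if obtained inside $G'$, would only give parts of size $n'^{1-\sqrt c}<n^{1-\sqrt c}$.

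The second gap is the stability/supersaturation dichotomy. Lemma \ref{lem2.2} applies only to $K_{r+1}$-free graphs, so its failure for $G'$ yields one copy of $K_{r+1}$, not an edge count exceeding $e(T_{n',r})+\Omega(n^2)$; indeed your lower bound $(1-\frac1r-O(\varepsilon^{1/4}))\frac{n'^2}{2}$ is \emph{below} the Tur\'an number, where Erd\H{o}s supersaturation gives nothing, so the claimed $\Omega(n^{r+1})$ copies of $K_{r+1}$ are unjustified. The dichotomy ``either $o(n^2)$-close to $T_{n,r}$ or $\Omega(n^{r+1})$ cliques'' at densities slightly below the Tur\'an threshold can be rescued via the graph removal lemma, but its constants are tower-type and cannot produce the explicit bounds $(\varepsilon^{1/4}+c^{1/(8r+8)})n^2$ and the explicit thresholds on $\varepsilon$ and $c$ in the statement; getting those is tantamount to proving Nikiforov's edge-stability theorem, i.e.\ the result you are trying to establish. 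Finally, the last step (from many $(r+1)$-cliques to $K_{r+1}(\lfloor c\ln n\rfloor,\dots,\lfloor c\ln n\rfloor,\lceil n^{1-\sqrt c}\rceil)$) is itself a separate quantitative theorem of Nikiforov; your iterated K\H{o}v\'ari--S\'os--Tur\'an sketch acknowledges but defers exactly the balancing of parameters where the work lies, so even granting the first two steps the argument is not complete.
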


Based on Theorem \ref{thm3.1}, Desai et al. \cite{DKL2022} derived the following spectral version stability result.

\begin{lem} \label{lem3.5}\emph{(Desai et al., \cite{DKL2022})}
Let $F$ be a graph with $\chi(F)=r+1\ge 3$.
For every $\varepsilon>0$, there exist $\delta>0$ and $n_0$ such that
if $G$ is an $n$-vertex $F$-free graph with $\rho(G)\geq (1-\frac1r-\delta)n$,
then $G$ can be obtained from $T_{n,r}$ by adding and deleting at most $\varepsilon n^2$ edges.
\end{lem}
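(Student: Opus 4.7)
The plan is to reduce Lemma \ref{lem3.5} to Nikiforov's spectral stability theorem (Theorem \ref{thm3.1}) by carefully choosing the parameters, and then to eliminate the ``embedded balanced blow-up'' alternative using the hypothesis that $\chi(F)=r+1$.

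First I would fix the target accuracy $\varepsilon>0$ and select an auxiliary constant $c>0$ small enough that
\begin{equation*}
c<r^{-8(r+21)(r+1)}, \qquad c^{\frac{1}{8r+8}}<\frac{\varepsilon}{2}.
\end{equation*}
Then I would pick
\begin{equation*}
\delta=\min\Bigl\{2^{-36}r^{-24},\ \bigl(\tfrac{\varepsilon}{2}\bigr)^{4}\Bigr\},
\end{equation*}
and take $n_{0}$ large enough that $\frac{1}{\ln n_{0}}<c$ and $\lfloor c\ln n_{0}\rfloor\geq |V(F)|$; the latter can be arranged since $c$ is now a fixed positive constant while $c\ln n\to\infty$. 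With these choices, the hypotheses of Theorem \ref{thm3.1} (applied with its $\varepsilon$ set equal to our $\delta$) are satisfied for any $n$-vertex graph $G$ with $\rho(G)\geq (1-\frac{1}{r}-\delta)n$ and $n\geq n_{0}$.

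Invoking Theorem \ref{thm3.1}, there are two alternatives. In the first alternative, $G$ differs from $T_{n,r}$ in at most
\begin{equation*}
\bigl(\delta^{1/4}+c^{1/(8r+8)}\bigr)n^{2}\ \leq\ \bigl(\tfrac{\varepsilon}{2}+\tfrac{\varepsilon}{2}\bigr)n^{2}=\varepsilon n^{2}
\end{equation*}
edges, which is exactly the conclusion we want. The second alternative asserts that $G$ contains a copy of the complete $(r+1)$-partite graph $K_{r+1}\bigl(\lfloor c\ln n\rfloor,\dots,\lfloor c\ln n\rfloor,\lceil n^{1-\sqrt{c}}\rceil\bigr)$. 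I would rule this out using the $F$-free hypothesis: since $\chi(F)=r+1$, $F$ admits a proper $(r+1)$-coloring and therefore embeds into $K_{r+1}(|V(F)|,\dots,|V(F)|)$ (put each color class into one side). Our choice $\lfloor c\ln n\rfloor\geq |V(F)|$ for $n\geq n_{0}$ guarantees that each of the first $r$ parts in the blow-up is at least $|V(F)|$, and the last part is even larger. Hence the blow-up contains a copy of $F$, contradicting the assumption that $G$ is $F$-free.

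Thus only the first alternative can hold, which proves the lemma with the chosen $\delta$ and $n_{0}$. The only nontrivial step is the parameter bookkeeping; the embedding argument that defeats the second alternative is immediate from the defining property $\chi(F)=r+1$, and no further structural analysis of $G$ is needed because Theorem \ref{thm3.1} has already done the heavy lifting. The main obstacle is therefore purely to verify that the explicit quantitative conditions on $c$, $\delta$, and $n_{0}$ in Theorem \ref{thm3.1} can be met simultaneously for any prescribed $\varepsilon$, which the choices above accomplish.
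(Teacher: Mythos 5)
Your argument is correct and is exactly the standard derivation that the paper itself points to: Lemma \ref{lem3.5} is quoted from Desai et al.\ without proof, and their proof is precisely your reduction to Theorem \ref{thm3.1} with the blow-up alternative excluded because $\chi(F)=r+1$ forces $F\subseteq K_{r+1}(|V(F)|,\dots,|V(F)|)\subseteq K_{r+1}(\lfloor c\ln n\rfloor,\dots,\lfloor c\ln n\rfloor,\lceil n^{1-\sqrt{c}}\rceil)$. The only cosmetic adjustment needed is to keep the inequalities strict (take $\delta<\min\{2^{-36}r^{-24},(\varepsilon/2)^{4}\}$ rather than equality, so that the hypotheses $0<\delta<2^{-36}r^{-24}$ and $\rho(G)>(1-\tfrac1r-\delta)n$ of Theorem \ref{thm3.1} are met), which does not affect the argument.
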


Based on Nikiforov's result above, Zhai and Lin gave the following spectral version of the color-critical theorem.

\begin{lem} \label{lem3.4}\emph{(Nikiforov, Zhai and Lin, \cite{Nikiforov2009, ZHAI2023})}
Let $r\geq 2$ and $H$ be a color-critical graph with $\chi(H)=r+1$.
Then there exists an $n_0\geq e^{|V(H)|r^{(2r+9)(r+1)}}$ such that ${\rm SPEX}(n,H)=\{T_{n,r}\}$ for $n\ge n_0$.
\end{lem}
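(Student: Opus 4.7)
The plan is to combine Nikiforov's spectral stability theorem (Theorem \ref{thm3.1}) with an embedding argument exploiting the color-critical structure of $H$. Let $G\in\mathrm{SPEX}(n,H)$. Since $T_{n,r}$ is $H$-free (as $\chi(T_{n,r})=r<r+1=\chi(H)$), we have $\rho(G)\ge \rho(T_{n,r})\ge (1-\tfrac{1}{r})n$. Applying Theorem \ref{thm3.1} with suitably chosen $\varepsilon$ and $c$ yields two alternatives: either $G$ contains a blow-up $K_{r+1}(\lfloor c\ln n\rfloor,\ldots,\lfloor c\ln n\rfloor,\lceil n^{1-\sqrt{c}}\rceil)$, or $G$ differs from $T_{n,r}$ by at most $(\varepsilon^{1/4}+c^{1/(8r+8)})n^2$ edges. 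The first alternative is ruled out for $n\ge n_0$: any $(r+1)$-chromatic graph on $|V(H)|$ vertices embeds into $K_{r+1}(|V(H)|,\ldots,|V(H)|)$, which sits inside the blow-up as soon as $\lfloor c\ln n\rfloor\ge |V(H)|$, contradicting $H$-freeness of $G$. The explicit bound $n_0\ge e^{|V(H)|r^{(2r+9)(r+1)}}$ is obtained by fixing $c$ just below its threshold $r^{-8(r+21)(r+1)}$ and solving the inequality $c\ln n\ge|V(H)|$.

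In the stable case, we extract a partition $V(G)=V_1\cup\cdots\cup V_r$ with $\bigl||V_i|-n/r\bigr|\le\eta n$ and $\sum_{i=1}^{r}e(V_i)\le\eta n^2$ for a small $\eta$ controlled by the starting $\varepsilon$. Let $B$ be the set of \emph{bad} vertices, namely those with degree less than $(1-\tfrac{1}{r}-\eta)n$ or with more than $\eta n$ neighbours inside their own part. A double-counting argument mirroring Lemmas \ref{lem2.4}--\ref{lem2.5} gives $|B|=O(\eta n)$, and every good vertex $v\in V_i$ satisfies $|V_j\setminus N_G(v)|=O(\eta n)$ for each $j\ne i$.

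The crux is to show that no edge lies inside any $V_i$. Suppose for contradiction that $uv$ is an edge inside $V_1$. Choose a color-critical edge $e=ab$ of $H$ such that $H-e$ admits an $r$-colouring with classes $C_1,\ldots,C_r$ where $a,b\in C_1$; map $a\mapsto u$, $b\mapsto v$, and greedily embed the remaining vertices of each $C_i$ into $V_i\setminus B$. At each step we must find a common neighbour in some $V_i$ of at most $|V(H)|-1$ already-placed good vertices; by Lemma \ref{counting} this common neighbourhood has size at least $|V_i|-|V(H)|\cdot O(\eta n)>n/(2r)$ once $\eta$ is small relative to $|V(H)|$, so the embedding completes and produces a copy of $H$ in $G$, a contradiction. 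Consequently $G$ is $r$-partite with parts $V_1,\ldots,V_r$.

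Finally, every $r$-partite graph is $H$-free (since $\chi(H)=r+1$), so adding any absent cross-edge $uv$ with $u\in V_i, v\in V_j$, $i\ne j$, preserves $H$-freeness while strictly raising the spectral radius by Perron--Frobenius (or directly by Lemma \ref{Wu} applied to the endpoint with larger eigenvector entry). Maximality of $\rho(G)$ therefore forces $G=K_{|V_1|,\ldots,|V_r|}$, and the classical fact that $\rho(K_{n_1,\ldots,n_r})$ is uniquely maximized over size-$n$ partitions by the balanced one yields $G\cong T_{n,r}$. The main obstacle is the embedding step: one must calibrate $\eta$ (and hence the $\varepsilon$ input into Theorem \ref{thm3.1}) so that the accumulated error $|V(H)|\cdot O(\eta n)$ in the common-neighbourhood estimate stays strictly below $|V_i|$, which is precisely what dictates how small $c$ must be taken and therefore produces the exponential lower bound on $n_0$ stated in the lemma.
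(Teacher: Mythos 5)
Note first that the paper does not prove Lemma \ref{lem3.4} at all: it is quoted as a known result of Nikiforov and of Zhai--Lin, so your attempt can only be judged on its own merits. Your overall skeleton (spectral extremality forces $\rho(G)\geq \rho(T_{n,r})>(1-\tfrac1r-\varepsilon)n$, Nikiforov's Theorem \ref{thm3.1} rules out the huge blow-up branch because $H\subseteq K_{r+1}(|V(H)|,\dots,|V(H)|)$, and stability gives an almost-balanced partition with few internal edges and a small bad set $B$) is indeed the standard framework, and the embedding of $H$ via a critical edge $ab$ with $a,b$ in the same colour class of $H-e$ correctly mirrors Lemma \ref{lem2.7}: it shows there is no internal edge \emph{both of whose endpoints are good}.

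The genuine gap is the sentence ``Consequently $G$ is $r$-partite with parts $V_1,\dots,V_r$.'' Your embedding argument needs the images of the vertices of $H$ to have large neighbourhoods into every other part, so it says nothing about internal edges incident to vertices of $B$ (low degree, or many neighbours in their own part). At this stage $G$ could still fail to be $r$-partite because of a handful of such vertices, and then your concluding step (add missing cross-edges, invoke uniqueness of the balanced complete multipartite graph) cannot even begin. Eliminating $B$ is exactly where spectral extremality has to be used in an essential way --- one shows via the Perron eigenvector that a bad vertex has small eigenvector entry and that rewiring it completely to the other parts (as in Lemma \ref{Wu}, or as in the arguments surrounding Lemmas \ref{lem3.62DF} and \ref{lemma4.9} in this paper, and in the proofs of Nikiforov and Zhai--Lin) strictly increases $\rho$ while preserving $H$-freeness, contradicting extremality; only after $B$ is emptied does ``no internal edges'' follow. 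A purely combinatorial stability argument cannot close this step, since an edge-extremal-style bound does not control a single exceptional vertex in a spectral problem. (Minor points: $\rho(T_{n,r})\geq(1-\tfrac1r)n$ is false when $r\nmid n$, though $\rho(T_{n,r})\geq(1-\tfrac1r)n-O(1)$ suffices; and your calibration of $c$ gives the exponent $8(r+21)(r+1)$ rather than $(2r+9)(r+1)$, which is harmless for the statement as phrased but not the constant quoted.)
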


\begin{proof}[\bf{Proof of Theorem \ref{thm1.2}}] For $p\geq 3$, $q\geq 1$, and $n$ sufficiently large,
let $G$ be an $n$-vertex $B_{p,q}$-free graph attaining the maximal spectral radius.
Based on Li and Peng's result in Theorem \ref{thm1.0}, to prove Theorem \ref{thm1.2},
it suffices to show that $G$ is $K_{r+1}$-free.
Obviously, $G$ is connected (since adding any edge $e$ between two components increases the spectral radius and does not form a $B_{p,q}$ in $G+e$).
Note that $Y_r(n)$ is $B_{p,q}$-free. Then
\begin{equation}\label{equ-2}
\rho(G)\ge \rho(Y_r(n))\ge \frac{2e(Y_r(n))}{n}\ge \Big(\frac{p-1}p-\eta \Big)n,
\end{equation}
where $\eta$ is defined in (\ref{align-1}).

By Lemma \ref{lem3.5}, $G$ can be obtained from $T_{n,r}$
by adding and deleting at most $\varepsilon n^2$ edges. It follows that all of Lemmas \ref{lem2.3}-\ref{lem2.7} hold for the spectral extremal graph $G$. We still say that $G$ has a vertex partition $V(G)=\bigcup_{i=1}^{p}V_i$ such that
$\sum_{i=1}^{p}e(V_i)$ attains the minimum.

Let $S=\{v\in V(G)~|~d_G(v)\leq \big(\frac{p-1}{p}-5\eta\big)n\}$, $W_i=\{v\in V_i~|~d_{V_i}(v)\geq 2\eta n\}$, $W=\bigcup_{i=1}^{p}W_i$ and $\overline{V}_i=V_i\setminus (S\cup W)$ for every $i\in [p]$. We relabel Lemmas \ref{lem2.4} and \ref{lem2.7} as follows for completeness and then give a spectral version of Lemma \ref{lem2.9}.

\begin{lem}\label{lem3.62}
$|S|\leq \eta n$.
\end{lem}

\begin{lem}\label{lem3.7}
For each $i\in [p]$, $G[\overline{V}_i]$ is empty and $W_i\subseteq S$.
\end{lem}

Let $\text{\bf x}=(x_1,\ldots, x_n)^\text{T}$ be the unit positive eigenvector corresponding to $\rho(G)$
 with the coordinate $x_v$ corresponding to the vertex $v$ of $G$. Set $x_{u^*}=\max\{x_v:v\in V(G)\}$ and
assume that $u^*\in V_{i_1}$ for some $i_1\in [p]$.

\begin{lem}\label{lem3.8EF}
$\sum\limits_{i\in [p]\setminus\{i_1\}}\sum\limits_{v\in {\overline{V}_i}\setminus\{u_0\}}x_v> (\frac{p-1}{p}- 2\eta)n$.
\end{lem}

\begin{proof}
Clearly, $\rho(G)x_{u^*}=\sum_{v\in N_{G}(u^*)}x_v\leq d_G(u^*)x_{u^*}$.
Then $d_G(u^*)\geq \rho(G)\geq \big(\frac{p-1}{p}-\eta\big)n$. It implies that $u^*\notin S$.
By Lemma \ref{lem3.7}, we get
$u^*\in \overline{V}_{i_1}$, $d_{\overline{V}_{i_1}}(u^*)=0$ and $d_{V_{i_1}}(u^*)\le|S|\le \eta n$. Then
\begin{align}\label{eq4.1}
\rho(G)x_{u^*}&=\!\!\sum_{v\in N_{S}(u^*)}x_v+\sum\limits_{i\in [p]\setminus \{i_1\}}\sum_{v\in N_{\overline{V}_i}(u^*)}x_v
\leq \eta n x_{u^*}+\sum\limits_{i\in [p]\setminus \{i_1\}}\sum_{v\in {\overline{V}_i}}x_v.
\end{align}
Combining (\ref{equ-2}) gives
\begin{align*}
\sum\limits_{i\in [p]\setminus \{i_1\}}\sum_{v\in {\overline{V}_i}\setminus\{u_0\}}x_v
\geq (\rho(G)- \eta n -1)x_{u^*}
> \Big(\frac{p-1}{p}- 2\eta\Big)n,
\end{align*}
as desired.
\end{proof}

\begin{lem}\label{lem3.62DF}
Let $u'\in V(G)$ with $x_{u'}=\min\{x_v:v\in V(G)\}$.
Then $\chi(G-\{u'\})=p$.
\end{lem}

\begin{proof}
Otherwise, $\chi(G-\{u'\})\geq p+1$.
For any $u_0\in S$, $d_G(u_0)< (\frac{p-1}{p}- 5\eta)n$.
Combining this with Lemma \ref{lem3.8EF}, we have
\begin{align}\label{eq4.2CD}
\sum\limits_{v\in N_G(u')}x_v=\rho(G)x_{u'}\leq \rho(G)x_{u_0}\leq d_G(u_0)x_{u^*}
<\sum\limits_{i\in [p]\setminus \{i_1\}}\sum_{v\in {\overline{V}_i}\setminus\{u_0\}}x_v.
\end{align}
Let $G'$ be the graph obtained from $G$ by deleting all edges incident to $u'$ and
adding all possible edges between $(\bigcup_{i\in [p]\setminus\{i_0\}}\overline{V}_i)\setminus \{u'\}$ and $u'$.
Similar as the proof of Lemma \ref{lem2.9}, we can obtain that $G'$ is also $B_{p,q}$-free.
By \eqref{eq4.2CD},
\begin{align*}
  \rho(G')-\rho(G) \geq  \mathbf{x}^{\mathsf{T}}\big(A(G')-A(G)\big)\mathbf{x}
                  = 2x_{u'}\Big(\sum\limits_{i\in [p]\setminus \{i_1\}}\sum_{v\in {\overline{V}_i}\setminus\{u'\}}x_v-\sum\limits_{v\in N_G(u')}x_v\Big)>0.
\end{align*}
Since $G-\{u'\}$ is a subgraph of $G'$, it follows that $\chi(G')\geq \chi(G-\{u'\})\geq p+1$.
Thus, $G'$ is an $n$-vertex $B_{p,q}$-free non-$p$-partite graph,
which contradicts the choice of $G$.
\end{proof}

Let $U_1,U_2,\dots,U_p$ be $p$ color classes of $G-\{u'\}$ and $U_{i}'=N_G(u')\cap U_i$.
Similar as the discussion after Lemma \ref{lem2.9}, each $U_{i}'$ is non-empty and $\big||U_i|-\frac{n}{p}\big|\leq\eta n$ for each $i\in [p]$. We still perform Procedure \ref{proc1} and denote $G_{\ell}$ the resulting graph. Recall the definition of $G_{i,j}$, we have the following statements.

\begin{lem}\label{lemma4.9}
$G=G_\ell$ and $G\cong G_{i_0,j_0}$ for some $\{i_0,j_0\}\subseteq [p]$.
\end{lem}

\begin{proof}
Note that $G=G_1$.
Suppose that $\ell\geq 2$.
For each $i\in [\ell-1]$,
there exists a vertex $v_i\in \big(\cup_{s'\in [p]\setminus \{s\}}U_{s'}\big)\setminus N_{G_i}(u_i)$
such that $u_iv_i\in E(G_{i+1})\setminus E(G_i)$.
Furthermore, $u_iv_i\in E(G_{\ell})\setminus E(G)$.
By the definition of $u'$, $x_{v_i}\ge x_{u'}$. Then
\begin{align}\label{eq4.4}
  \rho(G_{\ell})-\rho(G) \geq  \mathbf{x}^{\mathsf{T}}\big(A(G')-A(G)\big)\mathbf{x}
                  \geq 2 x_{u_i}\sum\limits_{i\in [\ell-1]}(x_{v_i}-x_{u'})
                  \geq 0.
\end{align}
If $\rho(G_{\ell})=\rho(G)$, then $\mathbf{x}$ is also a positive eigenvector of $G_{\ell}$.
Thus, $$\rho(G)x_{u'}=\rho(G_{\ell})x_{u'}=\sum_{v\in N_{G_{\ell}}(u')}x_v<\sum_{v\in N_{G}(u')}x_v,$$
which contradicts that $\rho(G)x_{u'}=\sum_{v\in N_{G}(u')}x_v$.
So, $\ell=1$ and $G\cong G_\ell$.

By a similar discussion as the proof of Lemma \ref{lemma3.7C},
$G$ is a subgraph of $G_{i_0,j_0}$ for some $\{i_0,j_0\}\subseteq [p]$.
Now we prove that $G\cong G_{i_0,j_0}$.
Otherwise, $G$ is a proper subgraph of $G_{i_0,j_0}$.
Then, $\rho(G)<\rho(G_{i_0,j_0})$.
On the other hand,
since $G_{i_0,j_0}$ is an $n$-vertex $B_{p,q}$-free non-$p$-partite graph,
by the choice of $G$ we get $\rho(G_{i_0,j_0})\leq \rho(G)$, a contradiction.
Thus, $G\cong G_{i_0,j_0}$.
\end{proof}

Now we ready to complete the proof of Theorem \ref{thm1.2}. Note that $G_{i_0,j_0}$ is $K_{p+1}$-free and non-$p$-partite. Then
by Theorem \ref{thm1.0}, $\rho(G_{i_0,j_0})\leq \rho(Y_r(n))$,
with equality holds if and only if $G_{i_0,j_0}\cong Y_r(n)$.
Since $K_{p+1}\subseteq B_{p,q}$, we know that $Y_r(n)$ is $B_{p,q}$-free and  non-$p$-partite.
By the choice of $G$,
we have $\rho(G)\geq \rho(Y_r(n))$. Combining these with Lemma \ref{lemma4.9},
we can obtain that $G\cong G_{i_0,j_0}\cong Y_r(n)$.
This completes the proof.
\end{proof}

\end{document}